\newtheorem{theorem}{Theorem}
\newtheorem{assumption}[theorem]{Assumption}
\newtheorem{example}[theorem]{Example}
\newtheorem{definition}[theorem]{Definition}
\newtheorem{lemma}[theorem]{Lemma}
\newtheorem{proof}[theorem]{Proof}
\begin{document}
%
\title{Stochastic Stability Analysis of Discrete Time System Using Lyapunov Measure}
%
%
%


\author{Umesh Vaidya,~\IEEEmembership{Senior Member,~IEEE,}
\thanks{Financial support from the National Science Foundation grant ECCS-1150405 and CNS-1329915 is gratefully acknowledged. U. Vaidya is with the Department
of Electrical and Computer Engineering, Iowa State University, Ames,
IA, 50011 USA e-mail: ugvaidya@iastate.edu.}
}

\maketitle

\begin{abstract}
In this paper, we study the stability problem of a stochastic, nonlinear, discrete-time system. We introduce a linear transfer operator-based Lyapunov measure as a new tool for stability verification of stochastic systems. Weaker set-theoretic notion of almost everywhere stochastic stability is introduced and verified, using Lyapunov measure-based stochastic stability theorems. Furthermore, connection between Lyapunov functions, a popular tool for stochastic stability verification, and Lyapunov measures is established. Using the duality property between the linear transfer Perron-Frobenius and Koopman operators, we show the Lyapunov measure and Lyapunov function used for the verification of stochastic stability are dual to each other. Set-oriented numerical methods are proposed for the finite dimensional approximation of the Perron-Frobenius operator;  hence, Lyapunov measure is proposed. Stability results in finite dimensional approximation space are also presented. Finite dimensional approximation is shown to introduce further weaker notion of stability referred to as coarse stochastic stability. The results in this paper extend our earlier work on the use of Lyapunov measures for almost everywhere stability verification of deterministic dynamical systems (``Lyapunov Measure for Almost Everywhere Stability", {\it IEEE Trans. on Automatic Control}, Vol. 53, No. 1, Feb. 2008).
\end{abstract}

\begin{IEEEkeywords}
Stochastic stability, almost everywhere, computational methods.
\end{IEEEkeywords}

%
\IEEEpeerreviewmaketitle

\section{Introduction}
Stability analysis and control of stochastic systems are a problem of theoretical and applied interests.
For stochastic systems, there are various notions of stabilities. Among the most popular notions of stabilities are almost sure and moment stability  \cite{Hasminskii_book,arnold_book_rds}. Almost sure notion of stability implies stability of sample-path trajectories of stochastic systems and is a weaker notion of stability compared to  the moment stability definition. Moment stability definition deals with the steady state probability density function of a stochastic system, in particular, the  integrability of it with respect to various powers of state. There is extensive literature on stochastic stability and stabilization in control and dynamical system literature. Some of the classic results on stability analysis and control of stochastic dynamical system are found in \cite{Hasminskii_book, arnold_book_rds,Kushner_book}. Mao \cite{mao_book} presents a systematic summary of results on  various stochastic stability definitions and Lyapunov function-based verification techniques for a stochastic system. The topic of stochastic stability of switched system and the Markov jump system has also attracted lots of attention with applications in network controlled dynamical systems \cite{network_basar_tamar, networksystems_foundation_sastry,scl04,amit_erasure_observation_journal,amit_ltv_journal,lure_stab_journal,sai_cdc_stochastic_linear}. The results in these papers address not only the stochastic stability problem, but also robust control synthesis and fundamental limitations issues that arise in stabilization and estimation of dynamical systems in the presence of stochasticity in a feedback loop. More generally, stochastic stability and stabilization problems for a nonlinear system with a multiplicative noise process are addressed in \cite{stochastic_kristic2,stochastic_kristic3,Vaidya_erasure_SCL}.

  As  in a deterministic system, most of the existing methods for stochastic stability verification are based directly or indirectly on the Lyapunov function. The Lyapunov function is used for both almost sure and moment stability verification. The Lyapunov function and Lyapunov function-based methods are also applied for stochastic stabilization \cite{stochastic_florchingeer1,stochastic_florchingeer2,stochastic_florchingeer3, stochastic_kristic1,stochastic_kristic4,stochastic_kristic5}. While there is extensive literature on the numerical procedure for the construction of a Lyapunov function for stability verification and stabilization of a deterministic dynamical system \cite{SOS_book,Prajna04,Parrilothesis,Junge_Osinga,Junge_scl_05,cell-cell1,hernandez,mey99a,Lasserre_ocp,book_mceneaney}, the literature on numerical methods for stochastic stability verification is very scant \cite{Grune_04,cell-cell2}. The contributions of this paper are two-fold. We provide a novel operator theoretical framework for stability verification of a stochastic dynamical system. We also show the proposed operator theoretic framework is amicable to computations thereby providing systematic numerical procedures for stochastic stability verification.

  The operator theoretic framework that we introduce in this paper is used to verify a weaker set-theoretic notion of almost everywhere stability of a stochastic system. The notion of almost everywhere stability was introduced for the first time in the work of Rantzer for continuous time deterministic dynamical systems \cite{Rantzer01}. This notion of stability was later extended to continuous time stochastic systems in \cite{almosteverywhere_stochastic}. The notion of a.e. stability essentially implies  the set of points in the state space starting from which system trajectories are not  attracted to the attractor set is a measure zero set.
   The almost everywhere notion of stability was also used for the design of stabilizing feedback controller and for solving verification problems for a nonlinear system \cite{Prajna04}. The input-output version of almost everywhere stability was developed in \cite{angeli_aeinputputput}. In \cite{Vaidya_TAC}, linear transfer operator-based framework was introduced to verify this weaker set-theoretic notion of a.e. stability for discrete-time dynamical system. Transfer Perron-Frobenius operator-based Lyapunov measure was introduced as a new tool to verify a.e stability of a nonlinear system. Duality between the Lyapunov function and Lyapunov measure was established using the duality in the transfer Perron-Frobenius and Koopman operators. Application of the Lyapunov measure for the design of stabilizing and optimal feedback controller was proposed in \cite{Vaidya_CLM_journal,arvind_ocp_journal_IEEE}. Operator theoretical framework involving spectral analysis of Koopman operator was proposed in \cite{mezic_koopman_stability} for stability analysis of deterministic nonlinear systems. The results in this paper can be viewed as a natural extension of the results from \cite{Vaidya_TAC} towards a.e. stability verification of discrete time stochastic dynamical system.

Associated with the stochastic dynamical system are two linear transfer operators, the Perron-Frobenius (P-F) and Koopman operators. These operators are used to study the evolution of ensembles of points in the form of measures or density supported on state space.  These operators are dual to each other and are used in the dynamical system literature for the analysis of deterministic and stochastic dynamical systems \cite{Lasota,Dellnitztransport,Dellnitiz_almostinvariant,mezic_koopmanism,Mezic_comparison,mezic_chaos,froyland_ocean,froyland_extracting}.  One of the main contributions of this paper is the introduction of a linear transfer P-F operator based Lyapunov measure for a.e. stochastic stability verification of stochastic dynamical system. We introduce a.e. almost sure notion of stability and provide Lyapunov measure-based stability theorems to verify this stability. The Lyapunov function for stochastic stability verification is shown  intimately connected with Koopman operator formalism. In particular, analytical formulas for the computation of  Lyapunov measure and the Lyapunov function are obtained in terms of resolvent of the Perron-Frobenius and Koopman operator respectively.  By exploiting the duality relationship between Koopman and Perron-Frobenius operators, we show the Lyapunov function and Lyapunov measure are dual to each other in a stochastic setting as well. The results presented in this paper are extended version of results appeared in \cite{vaidya_stochastic_lyap,vaidya_stochastic_lyap_comp}.

While results exist for the application of operator theoretical methods for the stability verification of a stochastic system in more general Markov chain settings \cite{sean_meyn}, the main motivation of this work is to provide computational methods for the construction of stability certificate in the form of a Lyapunov measure. Towards this goal, set-oriented methods  are used for the finite dimensional approximation of the linear transfer P-F operator and for computation of the Lyapunov measure \cite{Dellnitz_Junge,Dellnitz00}. The finite dimension approximation introduces a further weaker notion of a.e. stability by allowing stable dynamics in the complement of the attractor set, but their domain of attraction is smaller than the size of discretization cells used in the finite dimensional approximation. This notion of stability is referred to as coarse stochastic stability. The finite dimensional approximation of the P-F operator arises as a Markov matrix and provides for various alternate formulas for the computation of the Lyapunov measure.

Organization of this paper is as follows. In section \ref{section_prelim}, we discuss the preliminaries of the transfer operators and introduce various set-theoretic stochastic stability definitions. The main results of this paper on Lyapunov measure-based, stochastic stability theorems are proven in section \ref{section_main}. The formula for  obtaining the Lyapunov function for  a stochastic system in terms of the resolvent of the Koopman operator is presented in section \ref{section_koopman}. The connection between the Lyapunov measure and the Lyapunov function is discussed in section \ref{section_relation}. The set-oriented numerical method for the finite dimensional approximation of P-F operator and stability results using the finite dimensional approximation are discussed in \ref{section_discrete} and \ref{section_coarse} respectively. Simulation examples and discussed in Section \ref{section_examples} followed by conclusions in section \ref{section_conclusion}.

\section{Preliminaries}\label{section_prelim}
The set-up and preliminaries for this section are adopted from \cite{Lasota}. Consider the discrete-time stochastic dynamical system,
\begin{eqnarray}
x_{n+1}=T(x_n,\xi_n),\label{rds}
\end{eqnarray}
where $x_n\in X\subset \mathbb{R}^d$ is a  compact set. We denote ${\cal B}(X)$ as the $\sigma$-algebra of Borel subsets of $X$. The random vectors, $\xi_0,\xi_1,\ldots$, are assumed  independent identically distributed (i.i.d) and  takes values in $W$ with the following probability distribution,
\[{\rm Prob}(\xi_n\in B)=v(B),\;\;\forall n, \;\;B\subset W,\]
and is the same for all $n$ and $v$ is the probability measure.
The system mapping $T(x,\xi)$ is assumed  continuous in $x$ and for every fixed $x\in X$, it is measurable in $\xi$. The initial condition, $x_0$, and the sequence of random vectors, $\xi_0,\xi_1,\ldots$, are assumed  independent. Let $W\times W=W^2$. Then, the two times composition of the stochastic dynamical system, denoted by $T^2: X\times W^2 \to X $, is given by
\[x_{n+2}=T(T(x_n,\xi_n),\xi_{n+1})=:T^2(x_n,\xi_n^{n+1}),\]
where $\xi_n^{n+1}\in W^2$. Since the sequence of random vectors $\{\xi_n\}$ is assumed  i.i.d, the probability measure on $W^2$ will simply be the product measure, $v\times v:=v^2$. Similarly, the $n$-times composition of a stochastic system (\ref{rds}), $T^n: X\times W^n\to X$, is denoted by $x_{n+1}=T^n(x_0,\xi_0^n)$, where $\xi_0^n\in W^n$ with probability measure $v^n$.

The basic object of study in our proposed approach to stochastic stability is a linear transfer, the Perron-Frobenius operator,  defined as follows:
\begin{definition}[Perron-Frobenius (P-F) operator] \label{def-PF}Let ${\cal M}(X)$ be the space of finite measures on $X$.
The Perron-Frobenius operator, $\mathbb{P}: {\cal M}(X)\to {\cal M}(X)$, corresponding to the stochastic dynamical system (\ref{rds}) is given by
\begin{eqnarray}
&[\mathbb{P}_T\mu](A)=\int_X \int_W \chi_A(T(x,y))dv(y)d\mu(x)\nonumber\\&=E_{\xi}\left[\mu(T_{\xi}^{-1}(A))\right],\label{pfequation}
\end{eqnarray}
 for $\mu\in {\cal M}(X)$, and $A\in {\cal B}(X)$, where $\chi_A(x)$ is an indicator function of set $A$.
\end{definition}
The expectations on $\xi$ are taken with respect to the probability measure, $v$, and $T^{-1}_{\xi}(A)$ is the inverse image of set $A$ under the mapping $T(x,\xi)$ for a fixed value of $\xi$, i.e.,
\[T^{-1}_{\xi}(A)=\{x: T(x,\xi)\in A\}.\]
Furthermore, if we denote the P-F operator corresponding to the dynamical system, $T_{\xi}:X\to X$, for a fixed value of $\xi$ as
\[[\mathbb{P}_{T_\xi}\mu](A)=\int_{X}\chi_A(T(x,\xi))d\mu(x)=\mu(T^{-1}_\xi(A)),\]
then, the P-F operator (\ref{pfequation}) can be written as
\[[\mathbb{P}_T\mu](A)=E_{\xi}[\mathbb{P}_{T_{\xi}}\mu](A).\]
The Koopman operator is dual to the P-F operator and  defined as follows:
\begin{definition}[Koopman operator] Let $h\in {\cal C}^0(X)$ be the space of continuous function. The Koopman operator, $\mathbb{U}_T :{\cal C}^0(X)\to {\cal C}^0(X)$,  corresponding to the stochastic system (\ref{rds}) is defined as follows:
\begin{eqnarray}
[\mathbb{U}_Th](x)=\int_W h(T(x,y))dv(y)=E_\xi[h(T(x,\xi))],
\end{eqnarray}
where the expectations are taken with respect to probability measure, $v$.
\end{definition}
Let $h$ be a measurable function and $\mu\in{\cal M}(X)$, define the inner product as
$\left<h,\mu\right>=\int_X hd\mu(x).$
With respect to this inner product, the Koopman and P-F operators are dual to each other. Using the inner product definition, we can write (\ref{pfequation}) as follows:
\[\left<\chi_A, \mathbb{P}_T\mu\right>=\left<\mathbb{U}_T\chi_A,\mu\right>.\]
Due to the linearity of the scalar product, this implies $\left<g_n,\mathbb{P}_T\mu\right>=\left<\mathbb{U}_T g_n, \mu\right>$,
where $g_n=\sum_{i=1}^n \alpha_i\chi_{A_i}$
is the sum of a simple function. Since, every measurable function, $h$, can be approximated by a sequence $\{g_n\}$ of simple functions, we obtain in the limit,
$\left<h,\mathbb{P}_T\mu\right>=\left<\mathbb{U}_T h, \mu\right>.$

\begin{assumption}\label{assume_equilibrium} We assume  $x=0$ is an equilibrium point of  system (\ref{rds}), i.e.,
$T(0,\xi_n)=0,\;\;\;\forall n,$
for any given sequence of random vectors $\{\xi_n\}$ taking values in set $W$.
\end{assumption}

\begin{assumption}[Local Stability]\label{assume_local} We assume  the trivial solution, $x=0$, is locally stochastic, asymptotically stable. In particular, we assume there exists a neighborhood ${\cal O}$ of $x=0$, such that for all $x_0\in \cal O$,
\[{\rm Prob}\{T^n(x_0,\xi_0^n)\in {\cal O}\}=1,\;\;\forall n\geq 0,\]
and
\[{\rm Prob}\{\lim_{n\to \infty} T^n(x_0,\xi_0^n)=0\}=1.\]
\end{assumption}
Assumption \ref{assume_equilibrium} is used in the decomposition of the P-F operator in section (\ref{section_decompose}) and  Assumption \ref{assume_local} is used in the proof of Theorem (\ref{theorem1}). In the following, we will use the notation $U(\epsilon)$ to denote the $\epsilon$ neighborhood of the origin for any positive value of $\epsilon>0$. We have $0\in U(\epsilon)\subset {\cal O}$.

We introduce the following definitions for stability of stochastic dynamical systems (\ref{rds}).
\begin{definition}[Almost everywhere (a.e.) almost sure stability]
The equilibrium point, $x=0$, is said to be almost everywhere, almost sure stable with respect to finite measure, $m\in {\cal M}(X)$, if for every $\delta(\epsilon)>0$, we have
\[m\{x\in X : Prob \{\lim_{n\to \infty} T^n(x,\xi_0^n)\neq 0\}\geq \delta\}=0.\]
\end{definition}

\begin{definition}[a.e. stochastic stable with geometric decay]\label{def_aeas_geometric} For any given $\epsilon>0$,  let $U(\epsilon)$ be the $\epsilon$ neighborhood of the equilibrium point, $x=0$. The equilibrium point, $x=0$, is said to be almost everywhere, almost sure stable with geometric decay with respect to finite measure,  $m\in {\cal M}(X)$,    if there exists $0<\alpha(\epsilon)<1$, $0<\beta<1$, and $K(\epsilon)<\infty$, such that
\[m \{x\in X: Prob \{T^n(x,\xi_0^n)\in B\}\geq \alpha^n\}\leq K \beta^n, \]
for all sets $B\in {\cal B}(X\setminus U(\epsilon))$, such that $m(B)>0$.
\end{definition}

We introduce the following definition of absolutely continuous and equivalent measures.
\begin{definition}  [Absolutely continuous measure] A measure $\mu$
is absolutely continuous with respect to another measure, $\vartheta$
denoted as $\mu\prec\vartheta$, if $\mu(B) = 0$ for all $B\in{\cal B}(X)$ with $\vartheta(B) =
0$.
\end{definition}
\begin{definition}[Equivalent measure] The two measures, $\mu$ and $\vartheta$, are
equivalent $(\mu \approx \vartheta)$ provided $\mu(B) = 0$, if and only if $\vartheta(B) = 0
$ for $B \in{\cal B}(X)$.
\end{definition}

\subsection{Decomposition of the P-F operator}\label{section_decompose}
Let $E=\{0\}$. Hence, $E^c=X\setminus E$. We write $T: E\cup E^c\times W\to X$. For any set $B\in {\cal B}(E^c)$, we can write

\begin{eqnarray}&[\mathbb{P}_T\mu](B)=\int_X \int_W \chi_B(T(x,y))dv(y)d\mu(x)\nonumber\\&=\int_{E^c} \int_W \chi_B(T(x,y))dv(y)d\mu(x).\end{eqnarray}
This is because  $T(x,\xi)\in B$ implies  $x\notin E$.
Since  set $E$ is invariant, we define the restriction of the P-F operator on the complement set $E^c$. Thus, we can define the restriction of the P-F operator on the measure space ${\cal M}(E^c)$ as follows:
\[[\mathbb{P}_1\mu](B)=\int_{E^c} \int_W \chi_B(T(x,y))dv(y)d\mu(x),\]
for any set $B\in{\cal B}(E^c)$ and $\mu\in {\cal M}(E^c)$.

 Next, the restriction $T:E\times W\rightarrow E$ can also be used
to define a P-F operator denoted by
\begin{equation*}
[\mathbb{P}_0\mu](B)=\int_{B} \chi_B(T(x,y)) dv(y) d\mu(x),
\end{equation*}
where $\mu\in{\cal M}(E)$ and $B\subset {\cal B}(E)$.

The above considerations suggest a representation of the P-F
operator, $\mathbb{P}$, in terms of $\mathbb{P}_0$ and $\mathbb{P}_1$.
Indeed, this is the case, if one considers a splitting of the measured
space,
\begin{equation}
{\cal M}(X)={\cal M}_0\oplus{\cal M}_1, \label{eq:splitM}
\end{equation}
where ${\cal M}_0 := {\cal M}(E)$, ${\cal M}_1 := {\cal
M}(E^c)$, and $\oplus$ stands for direct sum.


Then it follows   the splitting defined by
Eq.~(\ref{eq:splitM}), the P-F operator has a lower-triangular
matrix representation given by
\begin{equation}
\mathbb{P}=\left[ \begin{array}{cc} \mathbb{P}_0 & 0 \\
\times & \mathbb{P}_1 \end{array} \right]. \label{eq:splitP}
\end{equation}


\section{Lyapunov measure and stochastic stability}\label{section_main}
We begin the main results section with the following Lemma.

\begin{lemma}\label{lemma1}
Let \[\xi_0^n=\{\xi_0,\ldots, \xi_n\}\in  \underbrace{W\times\ldots\times W}_{n}=:W^n\] and $F(x,\xi_0^n):=T^n(x,\xi_0^n): X\times W^n\to X$ be the notation for the $n$ times composition of the map $T:X\times W\to X$. Then, the Perron-Frobenius operator, $\mathbb{P}_F: {\cal M}(X)\to {\cal M}(X)$, corresponding to system mapping $F$ is given by
\[\mathbb{P}_F=\underbrace {\mathbb{P}_T \mathbb{P}_T\ldots\mathbb{P}_T}_{n}=: \mathbb{P}_T^n.\]
\end{lemma}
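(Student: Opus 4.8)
The plan is to prove the statement by induction on $n$, using the composition structure of the stochastic system together with the i.i.d.\ product-measure structure on $W^n$ and the defining formula (\ref{pfequation}) for the P-F operator. The base case $n=1$ is immediate since $F(x,\xi_0^0)=T(x,\xi_0)$ and $\mathbb{P}_F=\mathbb{P}_T$ by definition. For the inductive step, I would assume $\mathbb{P}_{T^n}=\mathbb{P}_T^n$ and establish $\mathbb{P}_{T^{n+1}}=\mathbb{P}_T\,\mathbb{P}_{T^n}=\mathbb{P}_T^{n+1}$, which reduces everything to the single claim $\mathbb{P}_{T\circ G}=\mathbb{P}_T\,\mathbb{P}_G$ whenever $G$ and $T$ act on independent noise variables and we use the product measure. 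Concretely, $T^{n+1}(x,\xi_0^{n})=T(T^n(x,\xi_0^{n-1}),\xi_n)$, and the noise $\xi_0^{n-1}$ governing the inner map is independent of $\xi_n$ governing the outer map, with joint law $v^{n-1}\times v$ on $W^{n-1}\times W = W^n$.

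The key computation is then, for $A\in{\cal B}(X)$ and $\mu\in{\cal M}(X)$,
\[
[\mathbb{P}_{T^{n+1}}\mu](A)=\int_X\int_{W^n}\chi_A\big(T(T^n(x,\xi_0^{n-1}),\xi_n)\big)\,dv^n(\xi_0^n)\,d\mu(x),
\]
and I would apply Fubini's theorem to split $dv^n = dv(\xi_n)\,dv^{n-1}(\xi_0^{n-1})$ and integrate the $\xi_n$ variable first. The inner integral $\int_W \chi_A(T(\,\cdot\,,\xi_n))\,dv(\xi_n)=[\mathbb{U}_T\chi_A](\,\cdot\,)$ evaluated at $y=T^n(x,\xi_0^{n-1})$ produces, after pushing the remaining integral through, exactly $[\mathbb{P}_{T^n}\mu]$ paired against $\mathbb{U}_T\chi_A$, i.e.\ $\langle \mathbb{U}_T\chi_A,\mathbb{P}_{T^n}\mu\rangle = \langle\chi_A,\mathbb{P}_T\mathbb{P}_{T^n}\mu\rangle$ by the duality relation established in the Preliminaries. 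Invoking the inductive hypothesis $\mathbb{P}_{T^n}=\mathbb{P}_T^n$ yields $[\mathbb{P}_{T^{n+1}}\mu](A)=[\mathbb{P}_T^{n+1}\mu](A)$ for all $A$, hence equality of measures.

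The main obstacle — really the only delicate point — is the measurability and integrability bookkeeping needed to apply Fubini: one must check that $(x,\xi_0^{n})\mapsto\chi_A(T^{n+1}(x,\xi_0^{n}))$ is jointly measurable (which follows from $T$ being continuous in $x$ and measurable in $\xi$, so compositions remain measurable) and that the iterated integrals are finite (immediate since $\chi_A\le 1$ and $\mu$, $v^n$ are finite measures). Once Fubini is justified, the argument is a routine unwinding of definitions; alternatively, one can bypass the explicit integral manipulation entirely by working on the dual side, showing $\mathbb{U}_{T^{n+1}}=\mathbb{U}_T\,\mathbb{U}_{T^n}$ directly from $\mathbb{U}_Th(x)=E_\xi[h(T(x,\xi))]$ and the tower property of conditional expectation under independence, and then transposing via the duality $\langle h,\mathbb{P}_T\mu\rangle=\langle\mathbb{U}_Th,\mu\rangle$ to conclude the semigroup identity for $\mathbb{P}$.
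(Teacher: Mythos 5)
Your proposal is correct and follows essentially the same route as the paper: induction on $n$, with the single-composition step resting on the i.i.d.\ product-measure structure and Fubini to identify the P-F operator of the composed map with the composition of the P-F operators. The only cosmetic difference is that you recombine the split integral through the duality $\langle h,\mathbb{P}_T\mu\rangle=\langle\mathbb{U}_T h,\mu\rangle$, whereas the paper manipulates the nested expectations $E_{\xi_1}\!\left[E_{\xi_0}\!\left[\mu(T_{\xi_0}^{-1}(T_{\xi_1}^{-1}(B)))\right]\right]$ directly; both amount to the same computation.
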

\begin{proof}
Let $\vartheta(A)=[\mathbb{P}_T\mu](A)$, it then follows from  Definition \ref{def-PF} for the P-F operator that
\begin{eqnarray}
[\mathbb{P}_T \mu](A)=\vartheta(A)=E_{\xi_0}\left[\mu(T_{\xi_0}^{-1}(A))\right],\label{eqn1}
\end{eqnarray}
where expectation is taken with respect to probability measure $v$ and $T_{\xi_0}^{-1}(A)$ is the inverse image of set $A$ under the mapping  $T(x,\xi_0)$ for fixed values of $\xi_0$, i.e.,
\[T_{\xi_0}^{-1}(A)=\{x\in X: T(x,\xi_0)\in A\}.\]
Hence,
\[[\mathbb{P}_T^2\mu](B)=[\mathbb{P}_T \mathbb{P}_T \mu](B)=[\mathbb{P}_T \vartheta](A)=E_{\xi_1}[\vartheta (T_{\xi_1}^{-1}(B))].\]
Following (\ref{eqn1}) and defining $T_{\xi_1}^{-1}(B)=A$, we obtain
\[[\mathbb{P}_T^2\mu](B)=E_{\xi_1}\left[E_{\xi_0}\left[\mu(T^{-1}_{\xi_0}(T^{-1}_{\xi_1}(B)))\right]\right].\]
Since $\xi_0$ and $\xi_1$ are independent with the same probability distribution $v$, we obtain
\begin{eqnarray}&[\mathbb{P}_T^2\mu](B)=E_{\xi_0^1}\left[\mu(T^{-1}_{\xi_0}(T^{-1}_{\xi_1}(B)))\right]\nonumber\\&=\int_{W^2} \int_X \chi_B(T(T(x,y_0),y_1))dv(y_0)dv(y_1)d\mu(x)\nonumber\\&=[\mathbb{P}_{T^2(x,\xi_0^1)} \mu](B).\end{eqnarray}
The main results of this lemma then follow from induction.
\end{proof}

Using the lower triangular structure of the P-F operator in Eq. (\ref{eq:splitP}), one can write the $\mathbb{P}_T^n$ as follows:
\begin{eqnarray}
\mathbb{P}_T^n=\left[ \begin{array}{cc}\mathbb{P}_0^n&0\\\times & \mathbb{P}_1^n\end{array}\right].
\end{eqnarray}

We now state the first main results of the paper on the stochastic stability expressed in terms of asymptotic behavior of $\mathbb{P}^n_1$.
\begin{theorem}\label{theorem1} The equilibrium point, $x=0$, for system (\ref{rds}) is almost everywhere, almost sure stable with respect to finite measure, $m\in {\cal M}(X)$, if
\[\lim_{n\to \infty}[\mathbb{P}^n_1 m](B)=0,\]
for every set $B\in{\cal B}(X\setminus U(\epsilon))$, such that $m(B)>0$. $U(\epsilon)$ is the $\epsilon$ neighborhood of the equilibrium point, $x=0$, for any given $\epsilon>0$.
\end{theorem}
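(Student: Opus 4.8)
\noindent\emph{Proof idea.} The plan is to convert the operator hypothesis into a statement about the exit probabilities $g_n^\epsilon(x):={\rm Prob}\{T^n(x,\xi_0^n)\notin U(\epsilon)\}$ and then combine it with the local stability Assumption~\ref{assume_local}. By Lemma~\ref{lemma1} and Definition~\ref{def-PF}, for every $A\in{\cal B}(X)$ and $\mu\in{\cal M}(X)$,
\[[\mathbb{P}_T^n\mu](A)=\int_X\int_{W^n}\chi_A(T^n(x,\xi_0^n))\,dv^n(\xi_0^n)\,d\mu(x)=\int_X {\rm Prob}\{T^n(x,\xi_0^n)\in A\}\,d\mu(x).\]
Fix $\epsilon>0$ small enough that $U(\epsilon)\subseteq{\cal O}$. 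Because $0\in U(\epsilon)$, the set $X\setminus U(\epsilon)$ is disjoint from $E=\{0\}$, which is invariant by Assumption~\ref{assume_equilibrium}; hence, as in the construction of $\mathbb{P}_1$ in Section~\ref{section_decompose}, such a set can be reached only from within $E^c$, and iterating yields $[\mathbb{P}_1^n m](B)=[\mathbb{P}_T^n m](B)$ for every $B\in{\cal B}(X\setminus U(\epsilon))$. Applying the hypothesis to $B=X\setminus U(\epsilon)$ (the case $m(X\setminus U(\epsilon))=0$ being immediate from Assumption~\ref{assume_local}) therefore gives $\int_X g_n^\epsilon(x)\,dm(x)\to 0$ as $n\to\infty$.

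The core estimate to establish next is
\[{\rm Prob}\{\,\lim_{n\to\infty}T^n(x,\xi_0^n)\neq 0\,\}\ \le\ g_n^\epsilon(x)\qquad\text{for every }n\text{ and every }x\in X.\]
The point is that any sample trajectory which ever visits ${\cal O}$ converges to $0$ almost surely: if $T^N(x,\xi_0^{N-1})\in{\cal O}$ for some $N$, then, by the independence of $\{\xi_n\}_{n\ge N}$ from $\xi_0^{N-1}$, the conditional law of the continued trajectory coincides with that of a trajectory started at $T^N(x,\xi_0^{N-1})\in{\cal O}$, which by Assumption~\ref{assume_local} stays in ${\cal O}$ and tends to $0$ with probability one. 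Taking the countable union over $N\ge 0$, the event $\{\lim_n T^n(x,\xi_0^n)\neq 0\}$ is contained, up to a ${\rm Prob}$-null set, in the event that the trajectory never enters ${\cal O}$; since $U(\epsilon)\subseteq{\cal O}$, this last event is contained in $\{T^n(x,\xi_0^n)\notin U(\epsilon)\}$ for each single index $n$, which gives the displayed bound. Passing to $\liminf_n$ on the right-hand side preserves the inequality.

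It then remains to invoke Fatou's lemma for the nonnegative functions $g_n^\epsilon$: $\int_X\liminf_n g_n^\epsilon\,dm\le\liminf_n\int_X g_n^\epsilon\,dm=0$, so $\liminf_n g_n^\epsilon(x)=0$ for $m$-almost every $x$, whence ${\rm Prob}\{\lim_n T^n(x,\xi_0^n)\neq 0\}=0$ for $m$-almost every $x\in X$. In particular $m\{x\in X:{\rm Prob}\{\lim_{n\to\infty}T^n(x,\xi_0^n)\neq 0\}\ge\delta\}=0$ for every $\delta>0$, which is exactly almost everywhere, almost sure stability with respect to $m$.

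I expect the main obstacle to be the conditioning argument in the second paragraph: one must verify measurability of $x\mapsto g_n^\epsilon(x)$ and of the relevant tail events, and then invoke the independence of $\{\xi_n\}_{n\ge N}$ from $\xi_0^{N-1}$ to transfer the almost-sure convergence on ${\cal O}$ granted by Assumption~\ref{assume_local} to the continuation of a trajectory that lands in ${\cal O}$ at time $N$. The bookkeeping identifying $\mathbb{P}_1^n$ with the restriction of $\mathbb{P}_T^n$ to Borel sets avoiding the equilibrium, via the lower-triangular structure~(\ref{eq:splitP}), is routine but should be written out.
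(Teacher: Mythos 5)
Your proposal is correct, but it follows a genuinely different route from the paper's proof. The paper argues through an invariant ``bad set'': for fixed $\delta>0$ it forms $S_n=\{x: {\rm Prob}(T^k(x,\xi_0^k)\in X\setminus U(\epsilon))\ge\delta \mbox{ for some } k>n\}$ and $S=\cap_n S_n$, uses local stability to replace $S$ by $\tilde S=S\cap(X\setminus{\cal O})$ with $m(\tilde S)=m(S)$, shows $S$ is left essentially invariant so that $[\mathbb{P}_1^n m](\tilde S)=m(\tilde S)$ for all $n$, and then the decay hypothesis applied to $B=\tilde S$ forces $m(\tilde S)=m(S)=0$. You instead work pointwise with the exit probabilities $g_n^\epsilon(x)$: the identity $[\mathbb{P}_T^n m](B)=\int_X {\rm Prob}\{T^n(x,\xi_0^n)\in B\}\,dm(x)$, the identification of $\mathbb{P}_1^n$ with $\mathbb{P}_T^n$ away from the equilibrium, a Markov-property conditioning step that converts Assumption~\ref{assume_local} into ``any trajectory that ever enters ${\cal O}$ converges a.s.,'' and Fatou's lemma. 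The trade-offs: your argument needs the hypothesis only for the single set $B=X\setminus U(\epsilon)$ rather than for the implicitly constructed set $\tilde S$, it delivers the slightly stronger conclusion that ${\rm Prob}\{\lim_n T^n(x,\xi_0^n)\neq 0\}=0$ for $m$-almost every $x$ (so all $\delta>0$ are handled at once), and it replaces the paper's invariance claim ``$x\in S$ iff ${\rm Prob}(T(x,\xi)\in S)=1$'' by an explicit conditioning argument; the price is the probabilistic bookkeeping you flag yourself --- joint measurability of $(x,\xi_0^n)\mapsto T^n(x,\xi_0^n)$ (Carath\'eodory conditions give this), measurability of $y\mapsto {\rm Prob}\{\lim_k T^k(y,\xi_0^k)=0\}$, and the countable union over entry times $N$ --- all standard but worth writing out, whereas the paper's proof stays entirely at the level of measures and the lower-triangular structure (\ref{eq:splitP}).
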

\begin{proof}
For any given $\delta(\epsilon)>0$, consider the following sets,
\[S_n=\{x\in X: Prob(T^k(x,\xi_0^k)\in X\setminus U(\epsilon))\geq \delta,\;{\rm for \;some \;}k>n\}\] and
\[S=\cap_{n=1}^\infty S_n.\]
So, set $S$ consists of points with probability larger than $\delta$ to end up in set $X\setminus U(\epsilon)$.  Now, since $\epsilon$ is arbitrary small and from the local stability property of equilibrium point, we have $0\in U(\epsilon)\subset {\cal O}$. The proof of this theorem follows, if we show  $m(S)=0$. Let $\tilde S=S \cap (X\setminus {\cal O})$. Then, from the property of the local neighborhood,  $m(S)=m(\tilde S)$. From the construction of set $S$, it follows  $x\in S$, if and only if $Prob(T(x,\xi)\in S)=1$. Hence,
\begin{eqnarray}[\mathbb{P}_T m](S)=\int_X Prob(\chi_S(T(x,\xi)))dm(x)=m(S).\label{ee1}
\end{eqnarray}
Now, $\tilde S\subset S$ with $m(\tilde S)=m(S)$. Since $T$ is continuous in $x$ and measurable in $\xi$, we have $[\mathbb{P}_1 m](\tilde S)=[\mathbb{P}_1 m]( S)$. Using (\ref{ee1}), we obtain $[\mathbb{P}_1 m](\tilde S)=m(\tilde S)$. $\tilde S\subset X$ lies outside some local neighborhood of $x=0$.
However, $\lim_{n\to \infty}[\mathbb{P}_1^n m](B)=0$ for any set $B\in {\cal B}(X\setminus U(\epsilon))$ and, in particular, for $B=\tilde S$.  Hence, we have $m(\tilde S)=m(S)=0$.
\end{proof}

\begin{theorem}\label{theorem_aegeometric} The $x=0$ solution for  system (\ref{rds}) is a.e. stochastic stable with geometric decay with respect to finite measure, $m\in {\cal M}(X)$, if and only if there exists a positive constant, $K(\epsilon)$, and $0<\beta<1$, such that
\[[\mathbb{P}^n_1 m](B)\leq K \beta^n, \;\;\;\forall n\geq 0\]
for every set $B\in {\cal B}(X\setminus U(\epsilon))$, such that $m(B)>0$.
\end{theorem}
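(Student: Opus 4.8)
The plan is to establish the equivalence between the geometric-decay stability definition (Definition \ref{def_aeas_geometric}) and the estimate $[\mathbb{P}^n_1 m](B)\leq K\beta^n$ by relating the measure $[\mathbb{P}^n_T m](B)$ to a probability. The key identity to exploit is the one implicit in the definition of the P-F operator: for any $B\in{\cal B}(X\setminus U(\epsilon))$,
\[
[\mathbb{P}^n_T m](B)=\int_X {\rm Prob}\{T^n(x,\xi_0^n)\in B\}\,dm(x),
\]
which follows from Lemma \ref{lemma1} together with Definition \ref{def-PF}. Since $B\subset X\setminus U(\epsilon)\subset E^c$, the restriction structure (\ref{eq:splitP}) gives $[\mathbb{P}^n_T m](B)=[\mathbb{P}^n_1 m'](B)$ where $m'$ is the restriction of $m$ to $E^c$, so we may work with $\mathbb{P}^n_1$ throughout. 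Writing $p_n(x):={\rm Prob}\{T^n(x,\xi_0^n)\in B\}$, the identity reads $[\mathbb{P}^n_1 m](B)=\int p_n(x)\,dm(x)$.

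For the "if" direction, suppose $[\mathbb{P}^n_1 m](B)\leq K\beta^n$. I would use a Markov-inequality argument on the nonnegative function $p_n$: fix $0<\alpha<1$ (to be chosen), and note
\[
m\{x: p_n(x)\geq \alpha^n\}\leq \frac{1}{\alpha^n}\int p_n(x)\,dm(x)=\frac{[\mathbb{P}^n_1 m](B)}{\alpha^n}\leq K\left(\frac{\beta}{\alpha}\right)^n.
\]
Choosing $\alpha\in(\beta,1)$ — e.g. $\alpha=\sqrt{\beta}$ — makes $\beta':=\beta/\alpha<1$, and then $m\{x: {\rm Prob}\{T^n(x,\xi_0^n)\in B\}\geq\alpha^n\}\leq K(\beta')^n$, which is exactly the required bound with the constants $\alpha(\epsilon)$, $\beta'$, $K(\epsilon)$ in Definition \ref{def_aeas_geometric}.

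For the "only if" direction, suppose the geometric-decay stability holds, so there are $\alpha\in(0,1)$, $\beta\in(0,1)$, $K<\infty$ with $m\{x:p_n(x)\geq\alpha^n\}\leq K\beta^n$ for all $B\in{\cal B}(X\setminus U(\epsilon))$ with $m(B)>0$. Split the integral $\int_X p_n\,dm$ over the set $\{p_n\geq\alpha^n\}$ and its complement: on the complement $p_n<\alpha^n$, so that part contributes at most $\alpha^n\, m(X)$; on $\{p_n\geq\alpha^n\}$ we use $p_n\leq 1$ and the hypothesis to bound the contribution by $m\{p_n\geq\alpha^n\}\leq K\beta^n$. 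Hence $[\mathbb{P}^n_1 m](B)\leq \alpha^n m(X)+K\beta^n\leq \tilde K\gamma^n$ with $\gamma:=\max\{\alpha,\beta\}<1$ and $\tilde K:=m(X)+K<\infty$, which is the desired conclusion (after renaming $\gamma\to\beta$, $\tilde K\to K$). The main technical point to be careful about is the reduction to $\mathbb{P}_1$ and ensuring the identity $[\mathbb{P}^n_1 m](B)=\int p_n\,dm$ is applied only for $B$ in the complement of the neighborhood, where the lower-triangular block structure legitimately lets us replace $\mathbb{P}_T$ by $\mathbb{P}_1$; the rest is the two applications of Markov's inequality and a routine split of an integral, with attention paid to the fact that the statement must hold for \emph{all} admissible $B$ simultaneously with uniform constants, which both directions above respect since the constants produced do not depend on $B$.
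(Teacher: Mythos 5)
Your proposal is correct and follows essentially the same route as the paper: both directions rest on the identity $[\mathbb{P}^n_1 m](B)=\int_X \mathrm{Prob}\{T^n(x,\xi_0^n)\in B\}\,dm(x)$, with the paper's split over the sets $S_n$ and $\bar S_n$ being exactly your Markov-inequality step in one direction and your integral split (bounded via finiteness/compactness) in the other. The only cosmetic differences are your explicit choice $\alpha=\sqrt{\beta}$ and the remark on reducing $\mathbb{P}_T$ to $\mathbb{P}_1$, which the paper leaves implicit.
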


\begin{proof}
We first prove the sufficient part.
\[[\mathbb{P}^n_1 m](B)=\int_{X}\int_{W^n} \chi_B(T^n(x,\xi_0^n))d v_0^n(\xi_0^n)dm(x)\leq K\beta^n.\]
\begin{eqnarray}&[\mathbb{P}^n_1 m](B)=\int_{X}\int_{W^n} \chi_B(T^n(x,\xi_0^n))d v_0^n(\xi_0^n)dm(x)\nonumber\\&=\int_X Prob \{T^n(x,\xi_0^n)\in B\}dm(x).\end{eqnarray}
Choose $0<\alpha<1$, such that $\frac{\beta}{\alpha}<1$.
Let \[S_n=\{x\in X : Prob\{T^n(x,\xi_0^n)\in B\}\leq \alpha^n\}\] and \[\bar S_n=\{x\in X:  Prob\{T^n(x,\xi_0^n)\in B\}\geq  \alpha^n\}.\]
Then,
\begin{eqnarray*}
[\mathbb{P}^n_1 m](B)&=&\int_{S_n} Prob \{T^n(x,\xi_0^n)\in B\} dm(x)\nonumber\\&+&\int_{\bar S_n} Prob \{T^n(x,\xi_0^n)\in B\}dm(x).
\end{eqnarray*}
Since integrals over both  sets are positive, we have
\[[\mathbb{P}^n_1 m](B)\geq \int_{\bar S_n}Prob \{T^n(x,\xi_0^n)\in B\}dm(x)\geq \alpha^n m(\bar S_n).\]

Hence, we have,
\[m(\bar S_n)\leq K \frac{\beta^n}{\alpha^n},\]
which is equivalent to
\[m\{x\in X: Prob\{T^n(x,\xi_0^n)\in B\}\geq\alpha^n\}\leq K\bar \beta^n.\]
For the necessary part, we assume  the system is a.e. stochastic stable with geometric decay (Definition \ref{def_aeas_geometric}). Construct the sets, $S_n$ and $\bar S_n$, from the sufficiency part of the proof. The parameter, $\alpha$, for this construction now comes from the stability definition (Definition \ref{def_aeas_geometric}). We have
\begin{eqnarray*}
[\mathbb{P}^n_1 m](B)&=&\int_{S_n} Prob \{T^n(x,\xi_0^n)\in B\} dm(x)\nonumber\\&+&\int_{\bar S_n} Prob \{T^n(x,\xi_0^n)\in B\}dm(x).
\end{eqnarray*}
Hence,
\[[\mathbb{P}^n_1 m](B)\leq \alpha^n m(S_n)+m(\bar S_n)\leq \alpha^n m(S_n)+K\beta^n. \]
Now, since $X$ is assumed  compact, we have $m(S_n)<M$ for some $M<\infty$. Hence, we have
\[[\mathbb{P}^n_1 m](B)\leq  \bar K \bar \beta^n,\]
where $\bar K=\max(M,K)$ and $\bar \beta=\max(\beta,\alpha)<1$.
\end{proof}

Following is the definition of Lyapunov measure introduced for stability verification of a stochastic system.
\begin{definition}[Lyapunov measure]\label{definition_Lyameas} A Lyapunov measure, $\bar \mu\in{\cal M}(X\setminus U(\epsilon))$, is defined as any positive measure  finite outside the $\epsilon$ neighborhood of equilibrium point and satisfies
\begin{eqnarray}
[\mathbb{P}_1 \bar \mu](B)< \gamma \bar \mu(B) \label{lya_meas}
\end{eqnarray}
for $0<\gamma\leq 1$ and for all sets $B\in {\cal B}(X\setminus U(\epsilon))$.
\end{definition}

\begin{theorem} Consider the stochastic dynamical system (\ref{rds}) with $x=0$ as a locally stable, equilibrium point. Assume there exists a Lyapunov measure, $\bar \mu$, satisfying Eq. (\ref{lya_meas}) with $\gamma<1$. Then,
\begin{enumerate}
\item $x=0$ is almost everywhere almost sure stable with respect to finite measure, $m$, which is absolutely continuous with respect to the Lyapunov measure $\bar \mu$.
    \item $x=0$ is  almost everywhere stochastic stable with geometric decay with respect to measure any finite measure, $m\prec\kappa \bar \mu$, for some constant, $\kappa>0$.
\end{enumerate}
\end{theorem}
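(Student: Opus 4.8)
The plan is to reduce both assertions to the stability theorems already proved in this section---Theorem \ref{theorem1} for item~1 and Theorem \ref{theorem_aegeometric} for item~2---by iterating the Lyapunov-measure inequality (\ref{lya_meas}). The first thing I would record is that the restricted Perron--Frobenius operator is positive and order preserving on measures: writing $[\mathbb{P}_1\mu](B)=\int_{E^c}\psi_B\,d\mu$ with $\psi_B(x):=\int_W\chi_B(T(x,y))\,dv(y)\ge 0$, linearity is immediate and $\mu_1\le\mu_2$ (as measures) implies $\mathbb{P}_1\mu_1\le\mathbb{P}_1\mu_2$. Since (\ref{lya_meas}) gives $\mathbb{P}_1\bar\mu\le\gamma\bar\mu$, applying $\mathbb{P}_1$ repeatedly and using linearity and monotonicity yields, by induction, $\mathbb{P}_1^n\bar\mu\le\gamma^n\bar\mu$, hence
\[
[\mathbb{P}_1^n\bar\mu](B)\le\gamma^n\bar\mu(B)\qquad\text{for all }B\in{\cal B}(X\setminus U(\epsilon)),\ n\ge 0 .
\]
Because $\bar\mu$ is finite on $X\setminus U(\epsilon)$ and $\gamma<1$, the right-hand side is dominated by $K\gamma^n$ with $K=\bar\mu(X\setminus U(\epsilon))<\infty$ and tends to $0$.

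For item~1, take a finite measure $m\prec\bar\mu$ and fix $B$ with $m(B)>0$, so $\bar\mu(B)>0$. By Lemma \ref{lemma1} one has $[\mathbb{P}_1^n\mu](B)=\int p_n\,d\mu$ with $p_n(x)={\rm Prob}\{T^n(x,\xi_0^n)\in B\}\in[0,1]$, so the estimate above gives $\int p_n\,d\bar\mu\to 0$. Let $g=dm/d\bar\mu\in L^1(\bar\mu)$ be the Radon--Nikodym density and split $g=g\,\chi_{\{g\le M\}}+g\,\chi_{\{g>M\}}$; then
\[
[\mathbb{P}_1^n m](B)=\int p_n\,dm\ \le\ M\int p_n\,d\bar\mu+\int_{\{g>M\}}g\,d\bar\mu .
\]
Letting $n\to\infty$ and afterwards $M\to\infty$ (using $g\in L^1(\bar\mu)$) shows $[\mathbb{P}_1^n m](B)\to 0$, and Theorem \ref{theorem1} then delivers almost everywhere almost sure stability with respect to $m$. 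For item~2, I would read $m\prec\kappa\bar\mu$ as the domination $m\le\kappa\bar\mu$ (otherwise the constant $\kappa$ is vacuous, since $\kappa\bar\mu$ has the same null sets as $\bar\mu$). Monotonicity then gives $[\mathbb{P}_1^n m](B)\le\kappa[\mathbb{P}_1^n\bar\mu](B)\le\kappa\gamma^n\bar\mu(B)\le\bar K\beta^n$ with $\bar K=\kappa\,\bar\mu(X\setminus U(\epsilon))$ and $\beta=\gamma<1$, for every $B\in{\cal B}(X\setminus U(\epsilon))$ with $m(B)>0$, and Theorem \ref{theorem_aegeometric} yields almost everywhere stochastic stability with geometric decay.

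The induction and the Radon--Nikodym bookkeeping are routine; the one genuinely delicate point is item~1, where the density $g$ may be unbounded, so $m$ cannot be dominated by a multiple of $\bar\mu$---the truncation / uniform-integrability argument is exactly what bridges $\int p_n\,d\bar\mu\to 0$ and $\int p_n\,dm\to 0$, and it is also the reason item~1 gives only convergence to zero while item~2 (bounded density) upgrades to a geometric rate. I would also be careful with the bookkeeping near the origin: $\mathbb{P}_1$ acts on measures on $E^c=X\setminus\{0\}$ whereas the stability notions are phrased with the neighborhood $U(\epsilon)$, so all identities above are to be applied to test sets $B\subset X\setminus U(\epsilon)$, which is precisely the regime in which $\bar\mu$ is finite, and local stability of $x=0$ (Assumption \ref{assume_local}) enters only through the two cited theorems.
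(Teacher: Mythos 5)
Your proof takes essentially the same route as the paper: iterate (\ref{lya_meas}) to obtain $[\mathbb{P}_1^n\bar\mu](B)\le\gamma^n\bar\mu(B)$, transfer the decay to $m$, and invoke Theorems \ref{theorem1} and \ref{theorem_aegeometric}. The only difference is that where the paper simply asserts that $m\prec\bar\mu$ together with $[\mathbb{P}_1^n\bar\mu](B)\to 0$ yields $[\mathbb{P}_1^n m](B)\to 0$, you supply the Radon--Nikodym truncation (uniform-integrability) justification, and, like the paper's own computation, you read the hypothesis of item 2 as the domination $m\le\kappa\bar\mu$; both are correct and consistent with the paper's argument.
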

\begin{proof} 1) Using the definition of Lyapunov measure with $\gamma<1$, we obtain
\[[\mathbb{P}^n_T \bar \mu](B)\leq \gamma^n \bar \mu(B)\implies \lim_{n\to \infty} [\mathbb{P}_T^n \bar \mu](B)=0.\]
Now,  since $m\prec\bar \mu$, we have
$ \lim_{n\to \infty} [\mathbb{P}_T^n m](B)=0.$
The proof then follows by applying the results from Theorem \ref{theorem1}.\\
2) We have
\[[\mathbb{P}_T^n m](B)\leq \kappa [\mathbb{P}_T^n \bar \mu](B)\leq\kappa \gamma^n \bar \mu(B)\leq K(\epsilon)\gamma^n, \]
where $K(\epsilon)=\kappa \bar \mu(X\setminus U(\epsilon))$, which, by  definition of the Lyapunov measure, is finite. The proof then follows by applying the results from Theorem \ref{theorem_aegeometric}.
\end{proof}

The following theorem provides for the construction of the Lyapunov measure as an infinite series involving the P-F operator.

\begin{theorem} \label{theorem_measure_series} Let the equilibrium point, $x=0$, be almost everywhere, stochastic stable with geometric decay with respect to measure, $m$. Then, there exists a Lyapunov measure (Definition \ref{definition_Lyameas}) $\bar \mu$ with $\gamma<1$. Furthermore, the Lyapunov measure is equivalent to measure $m$ (i.e., $\bar \mu\approx m$) and the Lyapunov measure dominates measure $m$ (i.e., $\bar \mu(B)\geq m(B))$.
\end{theorem}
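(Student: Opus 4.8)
The natural candidate for the Lyapunov measure is the resolvent-type series
\[
\bar\mu(B) \defin \sum_{n=0}^{\infty}\frac{1}{\gamma^n}[\mathbb{P}_1^n m](B),
\]
for a suitably chosen $\gamma<1$, where $\mathbb{P}_1$ is the restriction of the P-F operator to ${\cal M}(X\setminus U(\epsilon))$ appearing in Eq.~(\ref{eq:splitP}). The plan is to first fix a contraction rate: by hypothesis the equilibrium is a.e. stochastic stable with geometric decay with respect to $m$, so by Theorem~\ref{theorem_aegeometric} there exist $K(\epsilon)<\infty$ and $0<\beta<1$ with $[\mathbb{P}_1^n m](B)\le K\beta^n$ for every $B\in{\cal B}(X\setminus U(\epsilon))$ with $m(B)>0$ (and trivially for $m(B)=0$ as well, by monotonicity). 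Pick any $\gamma$ with $\beta<\gamma<1$; then $\sum_n \gamma^{-n}[\mathbb{P}_1^n m](B) \le K\sum_n (\beta/\gamma)^n = K\gamma/(\gamma-\beta) < \infty$ uniformly in $B$, so $\bar\mu$ is a well-defined finite measure on $X\setminus U(\epsilon)$ (countable additivity follows from monotone convergence, since each $\mathbb{P}_1^n m$ is a measure and the terms are nonnegative).

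Next I would verify the Lyapunov inequality (\ref{lya_meas}). Applying $\mathbb{P}_1$ term by term (justified by monotone convergence, as $\mathbb{P}_1$ is a positive linear operator on measures) gives
\[
[\mathbb{P}_1\bar\mu](B) = \sum_{n=0}^{\infty}\frac{1}{\gamma^n}[\mathbb{P}_1^{n+1} m](B) = \gamma\sum_{n=1}^{\infty}\frac{1}{\gamma^{n}}[\mathbb{P}_1^{n} m](B) = \gamma\bigl(\bar\mu(B) - m(B)\bigr) \le \gamma\,\bar\mu(B),
\]
and since $\bar\mu(B)\ge m(B)$ (the $n=0$ term), we in fact get $[\mathbb{P}_1\bar\mu](B) = \gamma(\bar\mu(B)-m(B))$, which is strictly less than $\gamma\bar\mu(B)$ on any $B$ with $m(B)>0$; on sets with $m(B)=0$ one argues separately, either by accepting the weak inequality as in Definition~\ref{definition_Lyameas} or by noting $\bar\mu(B)=0$ there too (see below), so the inequality is vacuous. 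This simultaneously establishes the domination claim $\bar\mu(B)\ge m(B)$, which is immediate from isolating the $n=0$ term in the defining series.

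Finally, for the equivalence $\bar\mu\approx m$: domination already gives $\bar\mu(B)=0\implies m(B)=0$. For the converse, suppose $m(B)=0$ with $B\in{\cal B}(X\setminus U(\epsilon))$; I need each $[\mathbb{P}_1^n m](B)=0$. This is where absolute continuity of the iterates with respect to $m$ is needed: $[\mathbb{P}_1 m](B) = \int_{E^c}\int_W \chi_B(T(x,y))\,dv(y)\,dm(x)$, and since $m(B)=0$... actually this step requires that $m$ be $\mathbb{P}_1$-compatible, i.e.\ that $\mathbb{P}_1^n m \prec m$, which is the genuine obstacle and the step I expect to be hardest — in the deterministic companion paper this is handled via a non-singularity (non-degeneracy) hypothesis on $T$ or by working with the natural reference measure. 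The cleanest route here is to invoke that the hypothesis of geometric decay with respect to $m$ implicitly forces the relevant iterates to charge only $m$-positive sets, or alternatively to restate the conclusion as $m\prec\bar\mu$ together with $\bar\mu$ dominating $m$, with full equivalence holding under the standing non-singularity assumption on the family $\{T_\xi\}$ inherited from the preliminaries. I would make that dependence explicit, then close the argument by Theorem~\ref{theorem_aegeometric} applied in reverse to confirm $\bar\mu$ itself inherits geometric decay, completing all three assertions.
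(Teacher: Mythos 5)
Your construction coincides with the paper's own proof: the same appeal to Theorem~\ref{theorem_aegeometric} for the bound $[\mathbb{P}_1^n m](B)\le K\beta^n$, the same weighted series $\bar\mu=\sum_k \alpha^k[\mathbb{P}_1^k m]$ (your $\alpha=1/\gamma$ with $\beta<\gamma<1$ versus the paper's $\alpha=1/\beta_2$ after writing $\beta=\beta_1\beta_2$), the same term-by-term application of $\mathbb{P}_1$ yielding $[\mathbb{P}_1\bar\mu](B)\le\gamma\bar\mu(B)$, and dominance read off from the $n=0$ term. The one point you flag as unfinished --- that $m(B)=0\Rightarrow\bar\mu(B)=0$ needs $\mathbb{P}_1^n m\prec m$, i.e.\ non-singularity of the maps $T_\xi$ with respect to $m$ --- is a genuine subtlety, but the paper does not resolve it either, simply asserting that equivalence ``follows from the construction,'' so your treatment is if anything more careful on that step.
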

\begin{proof}
Following the results of Theorem \ref{theorem_aegeometric}, we know  there exists a positive constant, $K$, and $\beta<1$, such that
\[[\mathbb{P}_1^n m](B)\leq K \beta^n.\]
Let $\beta=\beta_1\beta_2$, such that $\beta_1<1$ and $\beta_2<1$. Hence, we have
\[\alpha^n[\mathbb{P}_1^n m](B)\leq K \beta_1^n\]
with $\alpha=\frac{1}{\beta_2}>1$. Now, construct the Lyapunov measure as follows:
\begin{eqnarray}&\bar \mu(B)=\left(m+\alpha  [\mathbb{P}_1 m]+\alpha^2+\ldots\right)(B)\nonumber\\&=\sum_{k=0}^{\infty}\alpha^k [\mathbb{P}_1^k m](B).\label{infiniteseries}\end{eqnarray}
The above infinite series is well-defined and converges because $\alpha^n[\mathbb{P}_1^n m](B)\leq K \beta_1^n$. Multiplying both sides of (\ref{infiniteseries}) with $I-\alpha \mathbb{P}_1$, we obtain
\[\alpha [\mathbb{P}_1\bar \mu](B)\leq \bar \mu(B)\implies[\mathbb{P}_1\bar \mu](B)\leq \beta_2 \bar \mu(B). \]
 Allowing $\gamma=\beta_2$, we satisfy the requirements for the Lyapunov measure. The equivalence of measure, $m$, the Lyapunov measure, $\bar \mu$, and the dominance of the Lyapunov measure   follows from the construction of the Lyapunov measure as a infinite series formula Eq. (\ref{infiniteseries}).
\end{proof}

\section{Koopman operator and Lyapunov function}\label{section_koopman}

In this section, we describe the connection between the Koopman operator and Lyapunov function for a stochastic dynamical system.
\begin{definition}[$p^{th}$ Moment Stability] The equilibrium solution $x=0$ is said  $p^{th}$ moment exponentially stable with $p\in \mathbb{Z}^+$, if there exist a positive constants, $K<\infty$, and $\beta<1$, such that
\[E_{\xi_0^n}[\parallel x_{n+1}\parallel^p ]\leq K\beta^n \parallel x_0\parallel^p,\;\;\;\forall n\geq 0\]
for all initial conditions $x_0\in X$.
\end{definition}
The results of the following theorem are not new \cite{Hasminskii_book, mao_book}. However, the proposed construction of the Lyapunov function in terms of the Koopman operator is new. Furthermore, the connection between the Koopman operator and Lyapunov function also brings out clearly the dual nature of the Lyapunov function and the Lyapunov measure for stochastic stability verification.
\begin{theorem}\label{Theorem_Lyap} The equilibrium solution, $x=0$, is $p^{th}$ moment exponentially stable, if and only if there exists a nonnegative function $V:X\to \mathbb{R}^{+}$ satisfying
\[a\parallel x \parallel^p\leq V(x)\leq b\parallel x \parallel^p,\;\;\;\; [\mathbb{U}_TV](x)\leq c V(x),\]
where $a,b,c$ are positive constants with $c<1$. Furthermore, $V$ can be expressed in terms of  resolvent of the  Koopman operator as follows:
\[V(x)=(I-\mathbb{U}_T)^{-1}f(x),\]
where $f(x)=\parallel x\parallel^p$.
\end{theorem}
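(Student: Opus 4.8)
The plan is to establish the two directions separately, with the "if" direction being essentially a standard Foster–Lyapunov / supermartingale argument and the "only if" direction being where the resolvent formula does the work. For the sufficiency part, suppose such a $V$ exists. Iterating the contraction inequality $[\mathbb{U}_T V](x) \le c V(x)$ and using the semigroup property of the Koopman operator (i.e.\ $\mathbb{U}_T^n = \mathbb{U}_{T^n}$, the dual of Lemma \ref{lemma1}), one gets $[\mathbb{U}_T^n V](x) \le c^n V(x)$. Then I would unwind the definition of the Koopman operator for the $n$-fold composition: $[\mathbb{U}_T^n V](x) = E_{\xi_0^n}[V(T^n(x,\xi_0^n))]$. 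Combining with the sandwich bound $a\|x\|^p \le V(x) \le b\|x\|^p$ yields
\[
E_{\xi_0^n}[\|x_{n+1}\|^p] \le \frac{1}{a} E_{\xi_0^n}[V(x_{n+1})] = \frac{1}{a}[\mathbb{U}_T^n V](x_0) \le \frac{c^n}{a} V(x_0) \le \frac{b}{a} c^n \|x_0\|^p,
\]
so with $K = b/a$ and $\beta = c < 1$ we obtain $p^{th}$ moment exponential stability.

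For the necessity direction, assume $p^{th}$ moment exponential stability, set $f(x) = \|x\|^p$, and \emph{define}
\[
V(x) := \sum_{n=0}^{\infty} [\mathbb{U}_T^n f](x) = \sum_{n=0}^{\infty} E_{\xi_0^n}[\|T^n(x,\xi_0^n)\|^p].
\]
The exponential bound $E_{\xi_0^n}[\|x_{n+1}\|^p] \le K\beta^n\|x_0\|^p$ (shifting the index so the $n=0$ term is just $\|x\|^p$) guarantees the series converges, with $V(x) \le \frac{K}{1-\beta}\|x\|^p$, and clearly $V(x) \ge f(x) = \|x\|^p$, giving the sandwich with $a=1$, $b = K/(1-\beta)$. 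Applying $\mathbb{U}_T$ term-by-term (justified by monotone/dominated convergence since every term is nonnegative and bounded by the convergent majorant) gives $[\mathbb{U}_T V](x) = \sum_{n=1}^\infty [\mathbb{U}_T^n f](x) = V(x) - f(x) \le V(x) - \frac{1-\beta}{K} V(x)$, so $[\mathbb{U}_T V](x) \le c V(x)$ with $c = 1 - (1-\beta)/K < 1$. Finally, the identity $(I - \mathbb{U}_T)V = f$ reads off directly from $[\mathbb{U}_T V] = V - f$, which is the claimed resolvent representation $V = (I - \mathbb{U}_T)^{-1} f$; here $(I-\mathbb{U}_T)^{-1}$ is understood as the Neumann series $\sum_{n\ge 0}\mathbb{U}_T^n$, which converges on the relevant cone of functions precisely because of the exponential decay.

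The main obstacle I expect is purely a matter of care rather than depth: making the resolvent $(I - \mathbb{U}_T)^{-1}$ well-defined. The Koopman operator on $\mathcal{C}^0(X)$ need not have spectral radius less than one, so the Neumann series does not converge on all of $\mathcal{C}^0(X)$; one must restrict attention to functions dominated by $\|x\|^p$ (or work in a weighted sup-norm space with weight $\|x\|^{-p}$) where the exponential moment bound forces geometric decay of $\|\mathbb{U}_T^n f\|$. I would state this restriction explicitly and verify that $f(x)=\|x\|^p$ and the resulting $V$ both live in that space, so that the manipulations $\mathbb{U}_T \sum_n = \sum_n \mathbb{U}_T^{n+1}$ and $(I-\mathbb{U}_T)\sum_n \mathbb{U}_T^n = I$ are legitimate. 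A secondary minor point is continuity of $V$: it is a locally uniform limit of continuous functions (uniform on compact $X$ by the geometric majorant), hence continuous, so $V \in \mathcal{C}^0(X)$ as required.
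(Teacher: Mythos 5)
Your proposal is correct and follows essentially the same route as the paper: iterate the contraction $[\mathbb{U}_T V]\le cV$ together with the sandwich bounds for sufficiency, and for necessity define $V$ as the convergent series $\sum_k [\mathbb{U}_T^k f]$, which is exactly the paper's $V_N\to V=(I-\mathbb{U}_T)^{-1}f$ construction. The only difference is that you spell out the constants ($a=1$, $b=K/(1-\beta)$, $c=1-1/b$) and the Neumann-series/weighted-space justification of the resolvent, details the paper summarizes as following from the construction.
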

\begin{proof} We first prove the sufficient part. We have
\begin{eqnarray}&E_{\xi_0^n}[\parallel x_{n+1}\parallel^p]\leq \frac{1}{a}E_{\xi_0^n}[V(x_{n+1})]=\frac{1}{a}E_{\xi_0^{n-1}}\left[[\mathbb{U}_T V](x_n)\right]\nonumber\\
& \leq \frac{c}{a}E_{\xi_0^{n-1}}[V(x_n)]\leq \frac{c^n}{a}V(x_0)\leq \frac{b}{a}c^n \parallel x_0\parallel^p.
\end{eqnarray}
Let, $K=\frac{b}{a}$ and $c=\beta$. Thus, we obtain the desired condition for the  $p^{th}$ moment exponential stability. For the necessary part,
let
\begin{eqnarray}&V_N(x_0)=\sum_{k=0}^N E_{\xi_0^k}[f( x_{k+1})]=\sum_{k=0}^NE_{\xi_0^{k-1}}[[\mathbb{U}_T f](x_k)]\nonumber\\&=\sum_{k=0}^N [\mathbb{U}_T^k f](x_0).\end{eqnarray}
The uniform bound on $V_N(x)$ follows from $p^{th}$ moment exponential stability. Hence, $V(x)=\lim_{N\to \infty} V_N(x)$ is well defined. Furthermore,
\begin{eqnarray}
V(x)=\lim_{N\to \infty}\sum_{k=0}^N [\mathbb{U}_T^k f](x_0)=(I-\mathbb{U}_T)^{-1}f. \label{infiniteseries_koopman}
\end{eqnarray}
The bounds on the function, $V$, and the inequality, $[\mathbb{U}_T V](x)\leq cV(x)$, follow from the construction of the function, V.
\end{proof}

The P-F operator and Koopman operator are shown  dual to each other. In Eqs. (\ref{infiniteseries}) and (\ref{infiniteseries_koopman}), the Lyapunov measure and the Lyapunov function are expressed in terms of an infinite series involving the P-F and Koopman operators, respectively. Using the duality relationship between the P-F and Koopman operators, it then follows  the Lyapunov function and the Lyapunov measure used for verifying stochastic stability are dual to each other. In the following section, we establish a precise connection between the two stochastic, stability verification tools.

\section{Relation between Lyapunov measure and function}\label{section_relation}
In this section, we relate Lyapunov measure and Lyapunov function. We impose additional assumptions, the system mapping (\ref{rds}), i.e., $T:X\times W\to X$. We assume  the system mapping $T$ is $C^1$ invertible diffeomorphism with respect to state variable, $x$, for any fixed value of noise parameter,  $\xi$. For the diffeomorphism, we define
\[J_\xi^{-1}(x)=\left|\frac{d T_\xi^{-1}}{dx}(x)\right|,\]
where $|\cdot|$ stands for the determinant.
\begin{lemma}\label{lemma}
Let $\mathbb{P}_T$ be the P-F operator for the system mapping $T:X\times W\to X$. Then,
\[d[\mathbb{P}_Tm](x)=E_{\xi}[J^{-1}_\xi(x)]dm(x).\]
If $\rho(x)$ is the density of an absolutely continuous measure, $\mu$, with respect to the Lebesgue measure, $m$, i.e., $d\mu(x)=\rho(x)dm(x)$, then
\[d[\mathbb{P}_T\mu](x)=E_{\xi}[J_\xi^{-1}(x)\rho(T_\xi^{-1}(x))]dm(x).\]
\end{lemma}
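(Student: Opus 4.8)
The plan is to reduce the statement to the classical change-of-variables formula applied fiberwise in $\xi$, followed by an integration over the noise. First I would fix $\xi\in W$ and work with the deterministic Perron--Frobenius operator $\mathbb{P}_{T_\xi}$, for which $[\mathbb{P}_{T_\xi}\mu](A)=\mu(T_\xi^{-1}(A))$ as recorded in Section~\ref{section_prelim}. Writing $d\mu(x)=\rho(x)\,dm(x)$ with $m$ the Lebesgue measure, I would compute $\mu(T_\xi^{-1}(A))=\int_{T_\xi^{-1}(A)}\rho(x)\,dm(x)$ and carry out the substitution $x=T_\xi^{-1}(z)$. Since $T$ is assumed a $C^1$ invertible diffeomorphism in $x$ for each fixed $\xi$, the change-of-variables theorem gives $dm(x)=J_\xi^{-1}(z)\,dm(z)$ with $J_\xi^{-1}$ the Jacobian factor defined just before the lemma, and as $x$ ranges over $T_\xi^{-1}(A)$ the variable $z$ ranges over $A$; hence $[\mathbb{P}_{T_\xi}\mu](A)=\int_A \rho(T_\xi^{-1}(z))\,J_\xi^{-1}(z)\,dm(z)$.

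Next I would take the expectation over $\xi$. Using $[\mathbb{P}_T\mu](A)=E_\xi\big[[\mathbb{P}_{T_\xi}\mu](A)\big]$ from Definition~\ref{def-PF}, this gives $[\mathbb{P}_T\mu](A)=E_\xi\Big[\int_A \rho(T_\xi^{-1}(z))\,J_\xi^{-1}(z)\,dm(z)\Big]$. Because the integrand is nonnegative, Tonelli's theorem permits interchanging the $\xi$-expectation with the $z$-integral, so $[\mathbb{P}_T\mu](A)=\int_A E_\xi\big[J_\xi^{-1}(z)\,\rho(T_\xi^{-1}(z))\big]\,dm(z)$ for every $A\in{\cal B}(X)$. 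Since this holds for all Borel $A$, the Radon--Nikodym derivative of $\mathbb{P}_T\mu$ with respect to $m$ is $E_\xi[J_\xi^{-1}(x)\,\rho(T_\xi^{-1}(x))]$, which is the second claimed identity. The first identity $d[\mathbb{P}_T m](x)=E_\xi[J_\xi^{-1}(x)]\,dm(x)$ is then simply the special case $\rho\equiv 1$, i.e.\ $\mu=m$.

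The one genuinely delicate point is the interchange of expectation and integral and, underneath it, the joint measurability of $(z,\xi)\mapsto J_\xi^{-1}(z)\,\rho(T_\xi^{-1}(z))$. I would handle this by recalling the standing hypotheses of Section~\ref{section_prelim}: $T$ is continuous in $x$ and measurable in $\xi$, so $T_\xi^{-1}$ inherits the same regularity, and the Jacobian determinant, being built from the partial $x$-derivatives of $T_\xi^{-1}$, is again continuous in $z$ and measurable in $\xi$; such a Carath\'{e}odory function is jointly measurable, and together with nonnegativity this makes Tonelli applicable. A minor secondary point is to confirm the substitution is valid for an arbitrary Borel set $A$ and not merely for open sets, which is standard since the diffeomorphism change-of-variables formula holds for all Lebesgue-measurable sets. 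I expect this measurability bookkeeping to be the main obstacle; the remainder is the classical per-fiber change of variables together with Tonelli.
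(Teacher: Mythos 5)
Your argument is correct and is essentially the paper's own proof: both perform the change of variables $x\mapsto T_\xi^{-1}(x)$ for fixed noise value to produce the Jacobian factor $J_\xi^{-1}$, then integrate over $\xi$ (interchanging the order of integration) to identify the density $E_\xi[J_\xi^{-1}(x)\rho(T_\xi^{-1}(x))]$, with the first identity as the case $\rho\equiv 1$. Your added attention to Tonelli and joint measurability is a harmless refinement of the same route, not a different approach.
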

\begin{proof}
\[[\mathbb{P}_T m](B) =\int_W \int_X \chi_B(T_y(x))dm(x)dv(y)\]
\[=\int_W \int_X \chi_B(x)dm(T_y^{-1}(x))dv(y).\]
\[\int_X \chi_B(x)\int_W J_y^{-1}(x)dm(x)dv(y)=\int_X \chi_B(x)E[J_\xi^{-1}(x)]dm(x).\]
\[d[\mathbb{P}_T m]=E_\xi[J_\xi^{-1}(x)]dm(x).\]
\[d \bar \mu(x)=\rho(x)dm(x).\]
\[[\mathbb{P}_T \bar \mu](B)=\int_X\int_W \chi_B(T_y(x))dv(y)\rho(x)dm(x)\]
\[=\int_X\int_W \chi_B(x)dv(y)\rho(T_y^{-1}(x))J_y^{-1}(x)dm(x).\]
\[d[\mathbb{P}_T \bar \mu]=E[\rho(T_\xi^{-1}(x))J_\xi^{-1}(x)] dm(x).\]
%
\end{proof}

We have the following theorem connecting the Lyapunov measure and the Lyapunov function for a stochastic system.
\begin{theorem} Let $J_{\xi}(x)<\Delta<1$ for all $\xi\in W$ and $x\in X$.
1)
Let the equilibrium point, $x=0$, be almost everywhere, stochastic stable with geometric decay with respect to the Lebesgue measure, $m$. Assume  the Lyapunov measure is absolutely continuous with respect to the Lebesgue measure, $m$,  with density function $\rho(x)$, i.e., $d\bar \mu(x)=\rho(x)dm(x)$. Hence,  (following Theorem \ref{theorem_measure_series}), there exists a Lyapunov measure satisfying
    \[d\bar \mu(x)-\alpha d[\mathbb{P}_1 \mu](x)=g(x)dm(x).\]
    Furthermore, the density function corresponding to $\rho(x)$ satisfies

    \begin{eqnarray}
    a_1\parallel x\parallel^{-p}\leq \rho(x)\leq a_2\parallel x\parallel^{-p}.\label{radially_unbounded}
    \end{eqnarray}
    Then, the $x=0$ solution is $p^{th}$ exponentially stable with Lyapunov function $V$ obtained as
    $V(x)=\rho(x)^{-1}$.\\
2)
    Let  $x=0$ be the $p^{th}$ moment exponentially stable with the Lyapunov function $V(x)$ satisfying
    $V(x)<\beta E_{\xi}[J_\xi(x)V(T_{\xi}^{-1}(x))]$
    for some $\beta<1$.
    Then, the measure,
    \begin{eqnarray}\bar \mu(B)=\int_B \frac{1}{V^\gamma(x)}dm(x),
    \label{lya}\end{eqnarray}
    is a Lyapunov measure satisfying
    $E_{\xi}[\bar\mu(T_\xi^{-1}(B))]<\kappa \bar \mu(B)$
    for some $\kappa<1$ and for all $B\in{\cal M}(X\setminus U(\epsilon))$ with $m(B)>0$. $\gamma\geq 1$ is a suitable constant chosen, such that $\frac{1}{V^\gamma}$ is integrable.
\end{theorem}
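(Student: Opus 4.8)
The plan is to obtain both implications from three facts already established: the density-level identities for the Perron--Frobenius operator in Lemma~\ref{lemma}, the resolvent-series construction of the Lyapunov measure in Theorem~\ref{theorem_measure_series}, and the Lyapunov-function characterization of $p^{th}$ moment exponential stability in Theorem~\ref{Theorem_Lyap}. The two parts run in opposite directions, but each reduces to converting a ``backward-time'' inequality for the Lyapunov measure / Perron--Frobenius operator, $\alpha[\mathbb{P}_1\bar\mu](B)\le\bar\mu(B)$, into a ``forward-time'' inequality for the Lyapunov function / Koopman operator, $[\mathbb{U}_T V](x)\le c\,V(x)$, or conversely, by performing the change of variables $y=T_\xi^{-1}(x)$ under the expectation over $\xi$ and exploiting the standing hypothesis $J_\xi(x)<\Delta<1$ together with the $C^1$-diffeomorphism assumption on $T$.

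For part~1 I would begin from the Lyapunov measure supplied by Theorem~\ref{theorem_measure_series}, here assumed absolutely continuous with density $\rho$, so that $d\bar\mu(x)-\alpha\,d[\mathbb{P}_1\bar\mu](x)=g(x)\,dm(x)$ with $g\ge0$ and $\alpha>1$. Writing this in density form via Lemma~\ref{lemma} gives $\alpha\,E_\xi[J_\xi^{-1}(x)\rho(T_\xi^{-1}(x))]\le\rho(x)$ for $m$-a.e.\ $x\in X\setminus U(\epsilon)$. Set $V:=1/\rho$; the two-sided bound $a\|x\|^p\le V(x)\le b\|x\|^p$ is immediate from~(\ref{radially_unbounded}) with $a=1/a_2$, $b=1/a_1$. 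For the decrease inequality I would substitute $x=T_\xi(y)$ in this density inequality: since $J_\xi^{-1}(T_\xi(y))=1/J_\xi(y)$, it rearranges to $V(T_\xi(y))\le\alpha^{-1}J_\xi(y)V(y)$, whence $J_\xi(y)<\Delta<1$ and taking $E_\xi$ give $[\mathbb{U}_T V](y)=E_\xi[V(T_\xi(y))]\le\Delta\alpha^{-1}V(y)=:c\,V(y)$ with $c<1$. Thus $V=\rho^{-1}$ meets every hypothesis of Theorem~\ref{Theorem_Lyap}, whose sufficiency direction then yields the $p^{th}$ moment exponential stability of $x=0$.

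For part~2 I would first note that $E_\xi[\bar\mu(T_\xi^{-1}(B))]=[\mathbb{P}_T\bar\mu](B)$ by Definition~\ref{def-PF}, and that on $B\subset X\setminus U(\epsilon)$ this equals $[\mathbb{P}_1\bar\mu](B)$; it therefore suffices to verify the Lyapunov-measure inequality of Definition~\ref{definition_Lyameas} for $\bar\mu$ defined by~(\ref{lya}), with some $\kappa<1$. Applying Lemma~\ref{lemma} to the density $V^{-\gamma}$ gives $[\mathbb{P}_1\bar\mu](B)=\int_B E_\xi[J_\xi^{-1}(x)V^{-\gamma}(T_\xi^{-1}(x))]\,dm(x)$, so it is enough to bound the integrand by $\kappa\,V^{-\gamma}(x)$. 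Reading the hypothesis $V(x)<\beta\,E_\xi[J_\xi(x)V(T_\xi^{-1}(x))]$ fiber-wise in $\xi$ to upper-bound $V^{-\gamma}(T_\xi^{-1}(x))$, and absorbing the remaining Jacobian factor through $J_\xi<\Delta<1$ and the identity $J_\xi^{-1}(x)=1/J_\xi(T_\xi^{-1}(x))$, one obtains $E_\xi[J_\xi^{-1}(x)V^{-\gamma}(T_\xi^{-1}(x))]\le\kappa\,V^{-\gamma}(x)$ with an explicit $\kappa=\kappa(\beta,\Delta,\gamma)<1$ (of the form $\beta^{\gamma}\Delta^{\gamma-1}$); here $\gamma\ge1$ is taken large enough that $V^{-\gamma}$ stays integrable, which, since $V\ge a\epsilon^p>0$ on $X\setminus U(\epsilon)$ and $X$ is compact, also keeps $\bar\mu$ finite on $X\setminus U(\epsilon)$. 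That is precisely the assertion that $\bar\mu$ is a Lyapunov measure.

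The delicate point in both directions is reconciling the backward-time Perron--Frobenius inequality with the forward-time Koopman inequality: one has to interchange the change of variables $y=T_\xi^{-1}(x)$ with the expectation over $\xi$ and keep strict track of where each Jacobian determinant is evaluated, so that $J_\xi^{-1}(x)=|dT_\xi^{-1}/dx(x)|$, $1/J_\xi(x)$, and $1/J_\xi(T_\xi^{-1}(x))$ are never conflated. Handling the power $-\gamma$ under the $\xi$-expectation (either fiber-wise, as above, or through a convexity argument) is the only step that is not a routine manipulation; once it is in place, $J_\xi<\Delta<1$ together with $\alpha>1$, respectively with the choice of $\gamma$, forces the constants $c$ and $\kappa$ strictly below $1$.
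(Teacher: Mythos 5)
Your overall plan (pass to density form via Lemma~\ref{lemma}, then trade $\rho=1/V$ back and forth) matches the paper's, but both of your key steps rest on reading $\xi$-averaged inequalities fiber-wise in $\xi$, and that is a genuine gap. In part 1 you start from $\alpha\,E_\xi\!\left[J_\xi^{-1}(x)\rho(T_\xi^{-1}(x))\right]\le\rho(x)$ and then ``substitute $x=T_\xi(y)$'' to obtain $V(T_\xi(y))\le\alpha^{-1}J_\xi(y)V(y)$ for each $\xi$, which you then average. But the change of variables $y=T_\xi^{-1}(x)$ depends on the very $\xi$ being integrated over, so it cannot be interchanged with the expectation: an inequality on the $\xi$-average does not yield the pointwise-in-$\xi$ inequality. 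With atomic noise the most one can extract is $\alpha p_\ell J_{w_\ell}^{-1}(x)\rho(T_{w_\ell}^{-1}(x))\le\rho(x)$, and after re-averaging the weights $p_\ell$ cancel, leaving only $[\mathbb{U}_TV](y)\le (Q\Delta/\alpha)V(y)$, which is useless; for continuous noise no pointwise statement follows at all. The same defect appears in part 2: the hypothesis $V(x)<\beta E_\xi[J_\xi(x)V(T_\xi^{-1}(x))]$ is a lower bound on an average and gives no lower bound on any individual realization $J_\xi(x)V(T_\xi^{-1}(x))$, so your fiber-wise upper bound on $V^{-\gamma}(T_\xi^{-1}(x))$, and hence the constant $\kappa=\beta^{\gamma}\Delta^{\gamma-1}$, do not follow. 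You flag this as the delicate step and mention a convexity argument as an alternative, but the convexity you would need runs the wrong way (Jensen bounds $E_\xi[1/Z]$ from below, not above), so the gesture does not close the gap.

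The paper never argues fiber-wise. In part 1 it keeps the expectation intact, drops the Jacobian using $J_\xi^{-1}(x)>1$, and applies the H\"older-type fact $E_\xi[1/Z]\ge 1/E_\xi[Z]$ with $Z=V(T_\xi^{-1}(x))$ to convert the inequality for $1/V$ into $V(x)\le\bar\alpha\,E_\xi[V(T_\xi^{-1}(x))]$ with $\bar\alpha=1/\alpha<1$, and then invokes Theorem~\ref{Theorem_Lyap}. In part 2 it raises the hypothesis to the power $\gamma$, uses $E_\xi[Z]^{\gamma}\le E_\xi[Z^{\gamma}]$, and then exploits $J_\xi<\Delta<1$ with $\gamma$ sufficiently large to reverse the reciprocal inequality, obtaining $1/E_\xi[J_\xi^{\gamma}(x)V^{\gamma}(T_\xi^{-1}(x))]\ge E_\xi\!\left[J_\xi^{-1}(x)V^{-\gamma}(T_\xi^{-1}(x))\right]$ up to the factor $\bar\beta$, before integrating over $B$ and applying Lemma~\ref{lemma}. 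To repair your write-up you need an averaged H\"older/Jensen-type step of this kind relating $E_\xi$ of the function and of its reciprocal powers; a fiber-wise reduction cannot be made to work.
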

\begin{proof}
1)
\begin{eqnarray}
&d\bar \mu(x)-\alpha d[\mathbb{P}_1 \mu](x)=g(x)dm(x)\nonumber\\&=V^{-1}(x)dm(x)-\alpha E_{\xi}[J_\xi^{-1}(x)V^{-1}(T_\xi^{-1}(x))]dm(x).
\end{eqnarray}
Hence, we have
$V^{-1}(x)-\alpha E_{\xi}[J_\xi^{-1}(x)V^{-1}(T_\xi^{-1}(x))]=g(x)\geq 0.$
Now, since $J_{\xi}(x)<1$ or $J^{-1}_\xi(x)>1$ for all values of $\xi\in W$, we have
\begin{eqnarray}
\frac{1}{V(x)}\geq \alpha E_\xi\left[\frac{1}{V(T_\xi^{-1}(x))}\right].\label{e1}
\end{eqnarray}
Now, using Holder's inequality, we have
\[1=E_\xi\left[\frac{V(T^{-1}_\xi(x))}{V(T_\xi^{-1}(x))}\right]\leq E_\xi \left[\frac{1}{V(T_\xi^{-1}(x))}\right]E_\xi[V(T^{-1}_\xi(x))]. \]
\begin{eqnarray}
E_\xi \left[\frac{1}{V(T_\xi^{-1}(x))}\right]\geq \frac{1}{E_\xi[V(T^{-1}_\xi(x))]}.\label{e2}
\end{eqnarray}

Combining (\ref{e1}) and (\ref{e2}), we obtain
\begin{eqnarray}  V(x)\leq \bar \alpha E_{\xi}[V(T_\xi^{-1}(x))],\;\;\;\bar\alpha=\frac{1}{\alpha}<1.\label{eqn2}
\end{eqnarray}
Since $V(x)=\rho^{-1}(x)$ is assumed radially unbounded (Eq. (\ref{radially_unbounded})), then using the results from Theorem (\ref{Theorem_Lyap}) and  inequality (\ref{eqn2}), we achieve the desired results,  the $x=0$ solution
 is the $p^{th}$ moment exponentially stable.\\
2) We have $V(x)^\gamma< \beta^\gamma E_{\xi}[J_\xi(x) V(T_\xi^{-1}(x))]^\gamma.$
 Using the Holder inequality, we obtain
\[V(x)^\gamma< \bar\beta E_\xi [ J^\gamma_\xi(x) V^\gamma (T_\xi^{-1}(x))],\]
where $\bar \beta=\beta^\gamma<1$.

\[\frac{\bar\beta}{V(x)^\gamma}> \frac{1}{ E_\xi [ J^\gamma_\xi(x) V^\gamma (T_\xi^{-1}(x))]}.\]

Now, since $J_\xi<\Delta<1$ is uniformly bounded, there exists a $\gamma>1$, sufficiently large,  such that
\[\frac{\bar\beta}{V(x)^\gamma}>\frac{1}{ E_\xi [ J^\gamma_\xi(x) V^\gamma (T_\xi^{-1}(x))]}\geq E_\xi\left[\frac{1}{J_\xi(x)V^\gamma (T_\xi^{-1}(x))}\right].\]

Integrating over set $B\in {\cal B}(X\setminus U(\epsilon))$, we obtain

\[\bar \beta\int_B V^{-\gamma}(x)dm(x)\geq \int_B J_\xi^{-1}(x)V^{-\gamma}(T_\xi^{-1}(x))dm(x).\]
Using the results from Lemma (\ref{lemma}) and (\ref{lya}), we  obtain the desired results by letting $\kappa=\bar \beta$, i.e.,
\[[\mathbb{P}_1\bar \mu](B)\leq \kappa \bar \mu(B).\]

\end{proof}

\section{Discretization of the P-F operator}\label{section_discrete}
In this section, we will discuss the set-oriented numerical methods proposed for the finite dimensional approximation of the P-F operator. For the finite dimensional approximation of the P-F operator, we will make the following assumption on the stochastic process.
\begin{assumption} \label{finite_noise} We assume the random vector $\xi_n$ takes finitely many vector values $w_1,\ldots, w_Q$ with the following probabilities
\[{\rm Prob} \{\xi_n=w_\ell\}=p_\ell \;\;\;\forall n ,\;\;\;\ell=1,\ldots, Q.\]
\end{assumption}

For the finite dimension approximation of the P-F operator, we consider the finite partition of the state space, $X$, as follows:
\[{\cal X}=\{D_1,\ldots, D_L\},\]
where $\cup_{j} D_j =X$.  The infinite dimensional measure, $\mu$, is approximated by ascribing a real number $\mu_j$ to each cell of the partition, $\cal X$. In this way, the infinite dimensional measure, $\mu$, is approximated by a finite dimensional vector, $\mu\in \mathbb{R}^L$. The finite dimensional approximation of the P-F operator will arise as a Markov matrix of size $L\times L$ on the finite dimensional vector space. Using Assumption \ref{finite_noise}, the stochastic dynamical system, $T(x,\xi):X\times W\to X$, is parameterized by finitely many values that the random variable takes. In particular, for each fixed value of a random variable, i.e., $\xi_n=w_k$, we have $T(x,w_k)$ for $k=1,\ldots, Q$. For notational convenience, we will also write the mapping for each fixed value of random variable as  $T(x,w_k)=:T_{w_k}(x)$. Hence, we have $T_{w_k}: X\to X$ for $k=1,\ldots, Q$. With the finite parameterizations in the noise space, the finite dimensional approximation of the P-F operator follows exactly along the lines of a deterministic setting.
In particular, corresponding to a vector,
$\mu=(\mu_1,\cdots,\mu_L)\in\mathbb{R}^L$, define a measure on $X$
as
\begin{equation*}
d\mu(x)=\sum_{i=1}^L \mu_i \kappa_{i}(x)
\frac{dm(x)}{m(D_i)},\label{measure_resp}
\end{equation*}
where $m$ is the Lebesgue measure and $\kappa_j$ denotes the
indicator function with support on $D_j$. The approximation, denoted
by $P$, is now obtained as
\begin{eqnarray*}
\nu_j&=&[\mathbb{P}\mu](D_j)\nonumber\\&=&\sum_{\ell=1}^Q p_\ell \sum_{i=1}^L \int_{D_i}
\delta_{T_{w_\ell}(x)}(D_j)\mu_i\frac{dm(x)}{m(D_i)}\nonumber\\& =&\sum_{\ell=1}^Q p_\ell\sum_{i=1}^L \mu_i
{P}^{w_\ell}_{ij}= \sum_{i=1}^L\mu_i\sum_{\ell=1}^Q p_\ell
{P}^{w_\ell}_{ij},
\end{eqnarray*}
where
\begin{equation}
{P}^{w_\ell}_{ij}=\frac{m(T_{w_\ell}^{-1}(D_j)\cap D_i)}{m(D_i)}, \label{eq:pij}
\end{equation}
$m$  the Lebesgue measure.  The resulting matrix is
non-negative and because $T_{w_\ell}:D_i\rightarrow X$ for $\ell=1,\ldots,Q$,
\begin{equation*} \sum_{j=1}^L P^{w_\ell}_{ij}=1,\;\;\ell=1,\ldots, Q,
\end{equation*}
i.e., $P$ is a Markov or a row-stochastic matrix.

Computationally, several short-term trajectories are used to
compute the individual entries ${P}^{w_\ell}_{ij}$.  The mapping $T_{w_\ell}$ is
used to {\em transport} $M$ ``initial conditions" chosen to be
uniformly distributed within a set $D_i$.  The entry ${P}^{w_\ell}_{ij}$ is
then approximated by the fraction of initial conditions
in  box, $D_j$, after one iterate of the mapping.  In the
remainder of the paper, the notation of this section is used,
whereby $P$ represents the finite-dimensional Markov matrix
corresponding to the infinite dimensional P-F operator
$\mathbb{P}$. The finite dimensional approximation of the P-F operator can be used to study the approximated dynamics of the stochastic dynamical system. In particular, the eigenvector with eigenvalue one of the matrix $P$, i.e.,
\[\mu P=\mu,\;\;\mu_i\geq 0,\;\;\sum_{i=1}^L \mu_i=1,\]
captures the steady state dynamics of the system.

\section{Stability in finite dimension}\label{section_coarse}
The finite dimensional approximation of the P-F operator presented in the previous section can be decomposed into the lower triangular form similar to the decomposition of the infinite dimensional P-F operator in Eq. (\ref{eq:splitP}). With no loss of generality, we  assume  the equilibrium point at the origin is contained inside the cell, $D_1$, and the partition is sufficiently fine so  $D_1\subset {\cal O}$, where ${\cal O}$ is the local neighborhood of the stable equilibrium point. We decompose $\cal X$ into two complimentary  partition as follows,
\begin{eqnarray}
{\cal X}_0 = \{ D_1 \},\;\;\;
{\cal X}_1 = \{ D_{2},...,D_L \},\label{eq:partx1}
\end{eqnarray}
with domains $X_0=D_1$ and $X_1=\cup_{k=2}^{L}D_k$.
Since the local stable equilibrium point at the origin is contained in the cell, $D_1$, there exists a left  eigenvector with eigenvalue one to the Markov matrix, $P$, of the form $\mu_0=(1,0\ldots,0)$, i.e., $\mu_0=\mu_0 P$. The existence of the eigenvector can be used to decompose the Markov matrix, $P$, in the following upper triangular structure. Let $M\cong \mathbb{R}^L, M_0\cong \mathbb{R}$, and $M_1\cong \mathbb{R}^{L-1}$ denote the finite dimensional measure space associated with the partition ${\cal X}, {\cal X}_0$, and ${\cal X}_1$, respectively and as defined in (\ref{eq:partx1}). Then, for the splitting $M=M_0\oplus M_1$, the $P$ matrix has the following lower triangular representation.
\[P=\begin{pmatrix}P_0&0\\\times & P_1\end{pmatrix},\]
where $P_0=1$ and maps $P_0: M_0\rightarrow M_0$  and $P_1: M_1\rightarrow M_1$ is the sub-Markov matrix
with row sum less than or equal to one. We refer the readers to \cite{Vaidya_TAC} for details on the decomposition in finite dimension. Our goal is to study stability with respect to initial condition starting from partition ${\cal X}_1$ using the sub-Markov matrix, $P_1$.
In a discrete finite dimensional setting,  stability is expressed in terms of the
transient property of the stochastic matrix $P_1$.
\begin{definition}[Transient states] A sub-Markov matrix $P_1$
has only transient states, if $P_1^n \rightarrow 0$,
element-wise, as $n\rightarrow \infty$.\label{def:transp1}
\end{definition}
Transience of $P_1$ is shown to imply a weaker notion of stability referred to as coarse stochastic stability and  defined as follows.
\begin{definition}[Stochastic Coarse Stability]\label{def_coarsestability} Consider the finite partition, ${\cal X}_1$, of the complement set, $X_1=X\setminus X_0$. The equilibrium point, $x=0$, is said to be stochastic coarse stable with respect to the initial condition in $X_1$, if for  an attractor set, $B\subset U\subset X_1$ there exists no subpartition ${\cal S}=\{D_{s_1},\ldots,D_{s_l}\}$ in ${\cal X}_1$ with domain $S=\cup_{k=1}^l D_{s_k}$ such that $B\subset S\subset U$ and for all $x\in S$ $Prob\{ T(x,\xi)\in S\}=1$
.
\end{definition}

For typical partitions, coarse stability means stability modulo
attractor sets, $B$, with domain of attraction, $U$, smaller than the
size of cells within the partition. In the infinite-dimensional
limit, where the cell size (measure) goes to zero, one obtains
stability modulo attractor sets with measure $0$ domain of
attraction.

\begin{theorem} Let the equilibrium point, $x=0\in X_0\subset X$, with local domain of attraction contained inside $X_0$, $\mathbb{P}_1$ is the sub-Markov operator on ${\cal M}(E^c)$. $P_1$ be its finite dimensional approximation obtained with respect to the partition ${\cal X}_1$ of the complement set $X_1=X\setminus X_0$. For this, we have the following
\begin{enumerate}
\item Suppose a Lyapunov measure, $\bar \mu$, exists such that
\begin{eqnarray}[\mathbb{P}_1\bar \mu](B) < \bar \mu(B)\label{eq:eqnll22}
\end{eqnarray}
for all $B \subset {\cal B}(X_1)$, and additionally, $\bar \mu\equiv m$, the Lebesgue
measure. Then, the finite-dimensional approximation, $P_1$,
is transient.
\item Suppose $P_1$ is transient, then the equilibrium point, $x=0$, is coarse stable with respect
to the initial conditions in $X_1$.
\end{enumerate}
\end{theorem}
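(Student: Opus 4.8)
The plan is to prove the two implications separately, in each case reducing the statement to an elementary fact about the nonnegative matrix $P_1$.

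\emph{Part 1 (Lyapunov measure $\Rightarrow$ transience).} The first step is to push the infinite-dimensional inequality $[\mathbb{P}_1\bar\mu](B)<\bar\mu(B)$ down to the partition. Let $v=(v_2,\dots,v_L)\in\mathbb{R}^{L-1}$ be the vector of cell masses $v_i:=\bar\mu(D_i)$; since $\bar\mu\equiv m$ and each partition cell has positive, finite Lebesgue measure, $v_i\in(0,\infty)$. The assumption $\bar\mu\equiv m$ is used precisely so that the Galerkin-type approximation defining $P_1$ reproduces $\bar\mu$ exactly on the cells, i.e.\ $[\mathbb{P}_1\bar\mu](D_j)=\sum_{i=2}^{L}v_i(P_1)_{ij}=(vP_1)_j$; this is immediate from $(P_1)_{ij}=\sum_{\ell}p_\ell\,m(T_{w_\ell}^{-1}(D_j)\cap D_i)/m(D_i)$ once $\bar\mu$ is taken cell-wise constant (a scalar multiple of Lebesgue measure). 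Specializing the Lyapunov inequality to $B=D_j$ gives $(vP_1)_j<v_j$ for every $j$. Put $\rho:=\max_{2\le j\le L}(vP_1)_j/v_j$; this is a maximum over finitely many numbers each strictly below $1$, so $\rho<1$, and $vP_1\le\rho v$ componentwise, hence $vP_1^{n}\le\rho^{n}v$ by induction (using $P_1\ge0$). Since all $v_i>0$, for any $i,j$ we get $v_i(P_1^{n})_{ij}\le(vP_1^{n})_j\le\rho^{n}v_j$, so $(P_1^{n})_{ij}\le(v_j/v_i)\rho^{n}\to0$. Thus $P_1^{n}\to0$ elementwise, i.e.\ $P_1$ is transient (Definition \ref{def:transp1}).

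\emph{Part 2 (transience $\Rightarrow$ coarse stability).} I would argue by contraposition: assuming $x=0$ is \emph{not} coarse stable, I exhibit an absorbing (recurrent) block in $P_1$, contradicting transience. Failure of coarse stability (Definition \ref{def_coarsestability}) provides a nonempty subpartition ${\cal S}=\{D_{s_1},\dots,D_{s_l}\}\subseteq{\cal X}_1$ with domain $S=\bigcup_k D_{s_k}\subset X_1$ such that ${\rm Prob}\{T(x,\xi)\in S\}=1$ for every $x\in S$. Since $\xi$ takes each value $w_1,\dots,w_Q$ with positive probability (Assumption \ref{finite_noise}), this forces $T_{w_\ell}(x)\in S$ for all $x\in S$ and all $\ell$, i.e.\ $D_{s_i}\subseteq T_{w_\ell}^{-1}(S)=\bigcup_k T_{w_\ell}^{-1}(D_{s_k})$. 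Taking Lebesgue measure of the (essentially disjoint) decomposition $D_{s_i}=\bigcup_k\bigl(D_{s_i}\cap T_{w_\ell}^{-1}(D_{s_k})\bigr)$ and dividing by $m(D_{s_i})>0$ gives $\sum_k P^{w_\ell}_{s_i,s_k}=1$ for every $\ell$, whence $\sum_k(P_1)_{s_i,s_k}=\sum_\ell p_\ell\sum_k P^{w_\ell}_{s_i,s_k}=1$. As each row of the sub-Markov matrix $P_1$ sums to at most one, row $s_i$ of $P_1$ is supported entirely on the columns $\{s_1,\dots,s_l\}$. Hence the principal submatrix $Q:=\bigl((P_1)_{s_i,s_j}\bigr)_{i,j=1}^{l}$ is row-stochastic and the $\{s_1,\dots,s_l\}$-block is invariant under $P_1$, so $(P_1^{n})_{s_i,s_j}=(Q^n)_{s_i,s_j}$ and $\sum_j(Q^n)_{s_i,s_j}=1$ for all $n$; in particular some entry in row $s_i$ of $Q^n$ is $\ge1/l$ for every $n$, so $P_1^{n}\not\to0$ elementwise, contradicting transience.

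\emph{Where the work is.} Part 2 is essentially bookkeeping with the definition of $P^{w_\ell}_{ij}$. The one delicate point is the first step of Part 1: passing from the inequality for $\bar\mu$ on arbitrary Borel sets to the matrix inequality $vP_1<v$. This uses the hypothesis $\bar\mu\equiv m$ in the strong form that $\bar\mu$ is cell-wise constant (equivalently, the Galerkin projection is exact on $\bar\mu$); merely bounding the density $c_1\le d\bar\mu/dm\le c_2$ would only yield $(vP_1)_j<(c_2/c_1)v_j$, which need not be a contraction. Once $vP_1<v$ with $v$ strictly positive is in hand, transience is the one-line Perron--Frobenius-type estimate above.
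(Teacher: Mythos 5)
Your Part 2 is correct and is essentially the paper's own argument: failure of coarse stability yields a subpartition ${\cal S}$ whose domain $S$ is invariant under every $T_{w_\ell}$ (because each noise value has positive probability), hence the rows of $P_1$ indexed by ${\cal S}$ sum to one over the columns in ${\cal S}$; the paper phrases the conclusion via an invariant probability vector of the resulting Markov sub-block rather than your row-sum bound $\ge 1/l$, but the substance is identical.

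Part 1 has a genuine gap. You read the hypothesis ``$\bar\mu\equiv m$'' as saying that $\bar\mu$ is (cell-wise proportional to) the Lebesgue measure, which is exactly what makes your key identity $[\mathbb{P}_1\bar\mu](D_j)=\sum_i \bar\mu(D_i)(P_1)_{ij}$ hold and reduces the rest to a one-line contraction estimate. In the paper, however, the condition is measure \emph{equivalence} (same null sets, the paper's ``$\approx$''), not equality: the paper's proof uses only the implication that for $S_1\subset S$, $m(S_1)=m(S)$ iff $\bar\mu(S_1)=\bar\mu(S)$, and the Lyapunov measures the theory actually produces (Theorem \ref{theorem_measure_series}) are equivalent to $m$ but not equal to it, let alone piecewise constant on the partition. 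Under mere equivalence the projection is not exact on $\bar\mu$ and the componentwise inequality $(vP_1)_j<v_j$ does not follow — as you yourself concede, even a two-sided density bound only yields $(vP_1)_j<(c_2/c_1)v_j$, and equivalence does not even provide such a bound — so your argument establishes a strictly weaker statement than the theorem. The paper closes this gap by contradiction: if $P_1$ is not transient it admits an invariant probability vector $\nu$; with $S$ the union of cells having $\nu_i>0$ and $S_\ell=S\cap T_{w_\ell}^{-1}(S)$, the identity $\nu P_1=\nu$ combined with the formula for $P^{w_\ell}_{ij}$ forces $m(S_\ell)=m(S)$ for every $\ell$; equivalence then gives $\bar\mu(S_\ell)=\bar\mu(S)$, hence $[\mathbb{P}_1\bar\mu](S)\ge\sum_{\ell} p_\ell\,\bar\mu(S_\ell)=\bar\mu(S)$ for a set of positive Lebesgue measure, contradicting (\ref{eq:eqnll22}). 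That route uses only the null sets of $\bar\mu$ versus $m$, which is why the weaker hypothesis suffices; to keep your direct argument you would have to add the assumption that $\bar\mu$ is piecewise constant with respect to ${\cal X}$ (or is Lebesgue itself), which is stronger than what the theorem asserts.
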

\begin{proof}
Before stating the proof, we claim for any two sets, $S_1$ and
$S$, such that $S_1\subset S$, if $\mu\approx m$, then
\begin{equation}
\mu(S_1)=\mu(S)\;\;\;\;\Longleftrightarrow\;\;\;\;m(S_1)=m(S).\label{claim_lemma}
\end{equation}
Denote $S_1^c :=S \setminus S_1$ as the complement set. We have,
$\mu(S_1)=\mu(S)$ implies $\mu(S_1^c)=0$ which, in turn, implies
$m(S_1^c)=0$ and, thus, $m(S_1)=m(S)$.

  1. We first present a proof
for the simplest case where the partition, ${\cal X}_1$, consists of
precisely one cell, i.e., ${\cal X}_1=\{D_L\}$. In this case,
$P_1\in[0,1]$ is a scalar given by
\begin{equation}
P_1 = \sum_{k=1}^Q p_k \frac{m(T_{w_k}^{-1}(D_L)\cap D_L)}{m(D_L)},\label{eq:p1inthm1}
\end{equation}
where $m$ is the Lebesgue measure. We need to show $P_1<1$.
Denote,
\begin{equation}
S_k=\{x\in D_L: \;T_{w_k}(x)\in D_L\},\;\;k=1,\ldots, Q.\label{eq:sands1}
\end{equation}
Clearly, $S_k\subset D_L$ and the existence of Lyapunov measure
$\bar{\mu}$ satisfying Eq.~(\ref{eq:eqnll22}) implies
\begin{equation*}
\bar{\mu}(S_k) = [\mathbb{P}^{w_k}_{1} \bar{\mu}](D_L)<\bar{\mu}(D_L),\;\;k=1,\ldots,Q.
\end{equation*}
Now,
\[\sum_{k=1}^Q p_k \bar \mu(S_k)=\sum_{k=1}^Q p_k [\mathbb{P}_1^{w_k} \bar \mu](D_L)]<\bar \mu(D_L).\]
Using (\ref{claim_lemma}), $m(S_k) \ne m(D_L)$ and since $S_k\subset
D_L$, we have $m(S_k)<m(D_L)$ for $k=1,\ldots, Q$.  Hence,
\[P_1=\sum_{k=1}^Q p_k \frac{m(S_k)}{m(D_L)}<1\]
is transient.

We prove the result for the general case, where ${\cal X}_1$ is a
finite partition, by contradiction.  Suppose $P_1$ is not transient. Then, using the
general result from the theory of finite Markov chains
\cite{Markovchains}, there exists at least one non-negative
invariant probability vector, $\nu$, such that
\begin{equation}
\nu\cdot P_1 = \nu.\label{eq:nup1nu}
\end{equation}
Let,
\begin{equation*}
S=\{x\in D_i:\;\nu_i>0\},\;\;\;\;S_\ell=\{x\in S: T_{w_\ell}(x)\in
S\},\label{eq:sands2}
\end{equation*}
for $\ell=1,\ldots, Q$.
It is claimed
\begin{equation}
 m(S_\ell)=m(S),\;\;\ell=1,\ldots, Q.\label{eq:claimbmbm}
\end{equation}
We first assume the claim to be true and show the desired
contradiction.  Clearly, $S_\ell \subset S$ and if the claim were
true, (\ref{claim_lemma}) shows
\begin{equation}
\bar\mu(S_\ell)=\bar\mu(S),\;\;\ell=1,\ldots, Q.\label{eq:bmbm1}
\end{equation}
Next, because $S\subset X_1$,
\begin{eqnarray*}
\mathbb{P}_1\bar{\mu}(S) &= \sum_{\ell=1}^Q p_\ell \bar{\mu}(T_{w_\ell}^{-1}(S)\cap X_1)\nonumber\\ & \ge
\sum_{\ell=1}^Q p_\ell \bar{\mu}(T_{w_\ell}^{-1}(S)\cap S)=\sum_{\ell=1}^Q p_\ell \bar \mu(S_\ell).
\end{eqnarray*}
This, together with Eq.~(\ref{eq:bmbm1}), gives
\begin{equation*}
\mathbb{P}_1\bar{\mu}(S) \ge \bar{\mu}(S)
\end{equation*}
for a set $S$ with positive Lebesgue measure.  This contradicts
Eq.~(\ref{eq:eqnll22}) and proves the theorem.

It remains to show the claim. Let $\{i_k\}_{k=1}^l$ be the indices
with $\nu_{i_k}>0$. Equation .~(\ref{eq:nup1nu}) gives
\begin{equation*}
\sum_{k=1}^l \nu_{i_k}\sum_{\ell=1}^Q p_\ell [P^{w_\ell}_1]_{i_k j_r} =
\nu_{j_r}\;\;\text{for}\;\;r=1,\hdots,l.
\end{equation*}
Taking a summation, $\sum_{r=1}^l$, on either side gives
\begin{equation*}
\sum_{k=1}^l \nu_{i_k} \sum_{r=1}^l \sum_{\ell=1}^Q p_\ell[P^{w_\ell}_1]_{i_k j_r} = 1.
\end{equation*}
Since individual entries are non-negative and $\nu$ is a
probability vector, this implies
\begin{equation*}
\sum_{r=1}^l\sum_{\ell=1}^Q p_\ell [P^{w_\ell}_1]_{i_k j_r} = 1 \;\;k=1,\hdots,l.
\end{equation*}
Using formula~(\ref{eq:pij}) for the
individual matrix entries, this gives
\begin{eqnarray*}
\sum_{r=1}^l\sum_{\ell=1}^Q p_\ell  m(T_{w_\ell}^{-1}(D_{j_r})\cap D_{i_k}) =
m(D_{i_k}),
\end{eqnarray*}
\begin{eqnarray*}
\sum_{\ell=1}^Q p_\ell \sum_{r=1}^l  m(T_{w_\ell}^{-1}(D_{j_r})\cap D_{i_k}) =
m(D_{i_k}).
\end{eqnarray*}
Since $D_{j_r}$ are disjoint for any fixed $w_\ell$,
\[\cup_{r}\{x: T_{w_\ell}(x)\in D_{j_r}\}=\{x: T_{w_\ell}(x)\in \cup_r D_{j_r}\},\]
i.e., $\cup T_{w_\ell}^{-1}(D_{j_r}) = T_{w_\ell}^{-1}(\cup D_{j_r})$. Furthermore, since $T_{w_\ell}$ is one-to-one, we have $(T^{-1}_{w_\ell}(D_{j_{r_1}})\cap D_{i_k})\cap (T^{-1}_{w_\ell}(D_{j_{r_2}})\cap D_{i_k})=\emptyset$ as

\[\{x\in D_{i_k}: T_{w_\ell}(x)\in D_{j_{r_1}}\}\cap \{x\in D_{i_k}: T_{w_\ell}(x)\in D_{j_{r_2}}\}=\emptyset\]
 for $r_1\neq r_2$. Hence, $\sum_{\ell=1}^Q p_\ell   m(T_{w_\ell}^{-1}(\cup_{r=1}^\ell D_{j_r})\cap D_{i_k}) =
m(D_{i_k})$. Therefore,
\[\sum_{\ell=1}^Q p_\ell   m(T_{w_\ell}^{-1}(\cup_{r=1}^\ell D_{j_r})\cap D_{i_k}) =
m(D_{i_k}). \]
 However, by
construction, $S=\cup_{r=1}^l D_{j_r}$ and, thus,
\[\sum_{\ell=1}^Q p_\ell   m(T_{w_\ell}^{-1}(S)\cap D_{i_k}) =
m(D_{i_k}). \]

Taking  a summation on both sides $\sum_{k=1}^l$, we obtain
\begin{eqnarray}\sum_{\ell=1}^Q p_\ell m(S_\ell)=m(S)=\sum_{\ell=1}^Q p_\ell m(S)\label{cc}.
\end{eqnarray}
Since $S_\ell \subset S$, we have $m(S_\ell)\leq m(S)$. From (\ref{cc}), we obtain
\[\sum_{\ell=1}^Q p_\ell (m(S_\ell)-m(S))=0.\]
Since $p_{\ell}>0$ and $m(S_\ell)\leq m(S)$, we conclude $m(S_\ell)=m(S)$.

2. Suppose $P_1$ is transient. To show the equilibrium point is coarse stable, we
proceed by contradiction.  Using
definition~\ref{def_coarsestability}, if the equilibrium point is not coarse stable, then
there exists an attractor set $B\subset U \subset X_1$ with a
sub-partition ${\cal S}=\{D_{s_1},...,D_{s_l}\}$, $S=\cup_{k=1}^l
D_{s_k}$ such that $B\subset S\subset U$ and $Prob\{T(x,\xi)\in S\}=1$ for all $x\in S$.
\[Prob\{ T(x,\xi)\in S\}=1=\sum_{\ell=1}^Q p_k \chi_{S}(T_{w_\ell}(x))\]
for $x\in S$. This implies  $S$ is left invariant by each  $T_{w_\ell}$ for $\ell=1,\ldots, Q$. Hence,
\begin{equation*}
P^{w_\ell}_{s_k j}=\frac{m(T_{w_\ell}^{-1}(D_j)\cap D_{s_k})}{m(D_{s_k})}=0,
\label{non-transience}
\end{equation*}
whenever $D_j\notin {\cal S}$. Since, $T_{w_\ell}:S\rightarrow S$,
\begin{equation*}
\sum_{\ell=1}^Q p_{\ell}\sum_{j=1}^{l}[P^{w_\ell}_1]_{s_i s_j}=1 \;\;\;\;\;\;i=1,...,l,
\end{equation*}
i.e., $P_1$ is a Markov matrix with respect to the finite partition,
${\cal S}$.  From the general theory of the Markov matrix
\cite{Markovchains}, there  exists an invariant probability
vector, $\nu$, such that $
\nu\cdot P_1^n = \nu$
for all $n>0$ and $P_1$ is not transient.
\end{proof}

There are various different equivalent ways of computing the finite dimensional approximation of  Lyapunov measure using the finite dimensional approximation of the P-F operator. In particular,  if $P_1$ is transient, then the Lyapunov measure $\bar \mu$ can be  computed using the following formulas.
\begin{enumerate}
\item Compute the Lyapunov measure using an infinite series formula,
\[\bar \mu=m\cdot (I-P_1)^{-1}=m+m\cdot P_1^2+m\cdot P_1^2+\ldots,\]
for row vector $m>0$ (element wise). For example $m$ could be taken as row vector of all ones corresponding to the finite dimensional approximation of Lebesgue measure.
\item Compute the Lyapunov measure as a solution of the linear program
\[\bar \mu (\alpha I-P_1)>0,\;\;\alpha<1.\]
\end{enumerate}
\section{Examples and Simulation}\label{section_examples}

%
%
%
%
%
\begin{example}
We consider an inverted pendulum example with a stochastic damping parameter, $\xi$.
\begin{eqnarray}\label{pend}
  \dot{x_{1}} &=& x_{2}. \nonumber \\
  \dot{x_{2}} &=& -\sin x_1-(\xi+0.7) x_{2}.
\end{eqnarray}
For the purpose of simulation, the continuous system is discretized with the time step of discretization, $\Delta t=0.1$. The phase space, $X$, for simulation is taken to be equal to $X=[-\pi,\pi]\times [-\pi,\pi]$. For the purpose of compacting the state space,  we identify $-\pi$ and $\pi$ along the $x$ and $y$ axis. The damping parameter,, $\xi$ is assumed  stochastic with zero mean and uniform distribution;  hence, of the form $[-\alpha,\alpha]$. In the following, we present simulation results by changing the variance, i.e., $\alpha$, of the random variable,  $\xi$, while keeping the mean zero and verifying the stochastic stability of the origin.

In Figs.  \ref{pend1} and \ref{pend2}, we show the plot for the Lyapunov measure with  random variable, $\xi$, supported on interval $[-0.5,0.5]$ and $[-0.75,0.75]$, respectively. The existence of Lyapunov measure implies  the origin is almost everywhere almost sure stable. When support of the random parameter, $\xi$, is increased to $[-1,1]$, the system fails to have Lyapunov measure and is  confirmed from the invariant measure plot shown in Fig. \ref{pend3}. From Fig. \ref{pend3}, we notice  the invariant measure has support around the origin.  This is in contrast to the invariant measure plots when $\xi$ is supported on $[-0.5,0.5]$ and $[-0.75, 0.75]$ and shown in Figs. \ref{pend1} and \ref{pend2}. The invariant measure in these cases is supported at the origin and is denoted by a blue dot at the origin in Figs. \ref{pend1} and \ref{pend2}.


\begin{figure}[h!]
    \center
\includegraphics[width=0.42\textwidth]{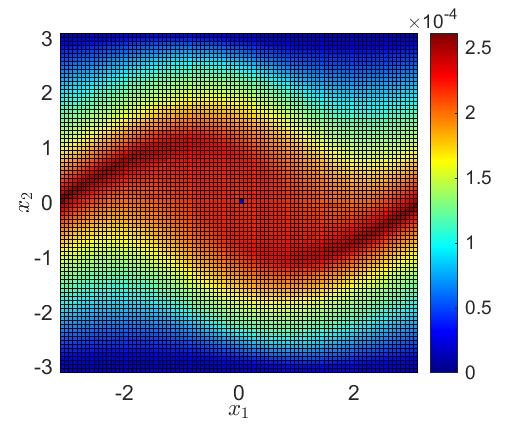}
    \vspace{-0.1in}
    \caption{  Lyapunov measure  plot  with support of $\xi\in[-0.5,0.5]$.}
    \label{pend1}
\end{figure}

\begin{figure}[h!]
    \center
    \includegraphics[width=0.42\textwidth]{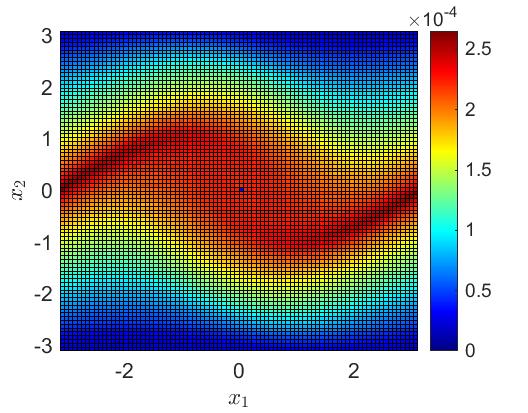}
    \vspace{-0.1in}
    \caption{  Lyapunov measure  plot  with support of $\xi\in[-0.75,0.75]$. }
    \label{pend2}
\end{figure}

%

\begin{figure}[h!]
    \center
    \includegraphics[width=0.42\textwidth]{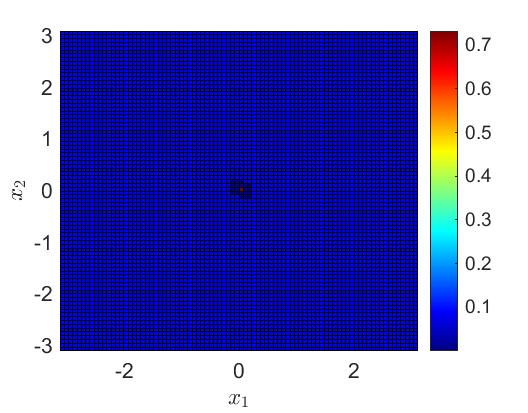}
    \vspace{-0.1in}
    \caption{ Invariant measure  plot  with support of $\xi\in[-1,1]$.}
     \label{pend3}
\end{figure}
\end{example}

\begin{example}
The second example is the stochastic counterpart of the deterministic almost everywhere stable system example from \cite{Rantzer01}.
\begin{eqnarray}\label{SE1}
  \dot{x} &=& -2x+x^2-y^2, \nonumber\\
  \dot{y} &=& -6y(1+\xi)+2xy,
\end{eqnarray}
where $\xi$ is a random variable with zero mean and uniformly distributed between $[-\alpha,\alpha]$.  The deterministic system has four equilibrium points $(0,0), (2,0) \;\text{and}\; (3, \pm \sqrt 3)$ of which the origin is stable equilibrium and the remaining are unstable. For the stochastic system, two of the equilibrium points - one at the origin and equilibrium point at $(2,0)$ are preserved. For the purpose of simulation, the system is discretized in time with the time step for discretization chosen to be equal to $\Delta t=0.1$. The state space $X$ is taken to be $X=[-4,4]\times [-4,4]$. For the purpose of compactness, $-4$ and $4$ are identified along the $x-$axis.

For better visualization, we plot the logarithm of the Lyapunov measure plot for varying variance of the random variable, $\xi$, by changing $\alpha$, while retaining its zero mean. In Figs. \ref{ran1} and  \ref{ran2}, we show the log plot for the Lyapunov measure with $\xi$ supported between $[-0.25,0.25]$ and $[-0.5,0.5]$, respectively. The existence of a Lyapunov measure in these plots implies  the origin is almost everywhere almost sure stable for these statistics of random variable.  When the support of the random variable is changed to $[-0.1,0.1]$, the origin is no longer almost everywhere almost sure stable. In Fig. \ref{ran3}, we show the plot for the invariant measure  supported in the neighborhood of the origin. This is in contrast to the invariant measure plot for the cases when $\xi\in [0.25,0.25]$ and $\xi\in[0.5,0.5]$. For both  cases, the invariant measure is supported only at the single cell containing the origin.

\begin{figure}[h!]
    \center
    \includegraphics[width=0.42\textwidth]{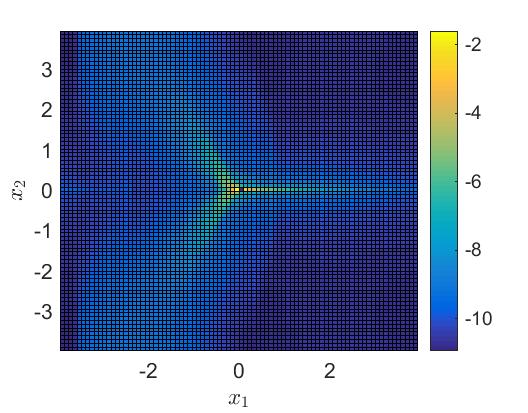}
    \vspace{-0.1in}
    \caption{ Log plot of Lyapunov measure with noise support $\xi\in [-0.25, 0.25]$.}
    \label{ran1}
\end{figure}

\begin{figure}[h!]
    \center
    \includegraphics[width=0.42\textwidth]{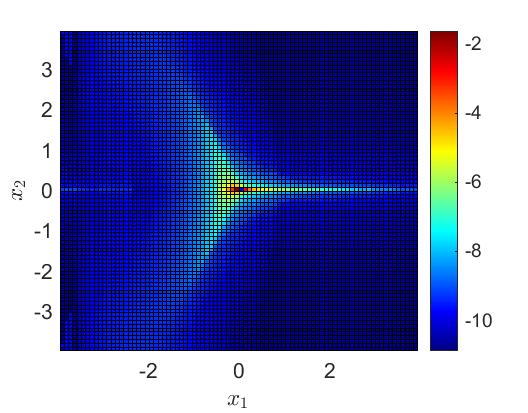}
    \vspace{-0.1in}
    \caption{ Log plot of Lyapunov measure with noise support $\xi\in [-0.5, 0.5]$.}
    \label{ran2}
\end{figure}


\begin{figure}[h!]
    \center
    \includegraphics[width=0.42\textwidth]{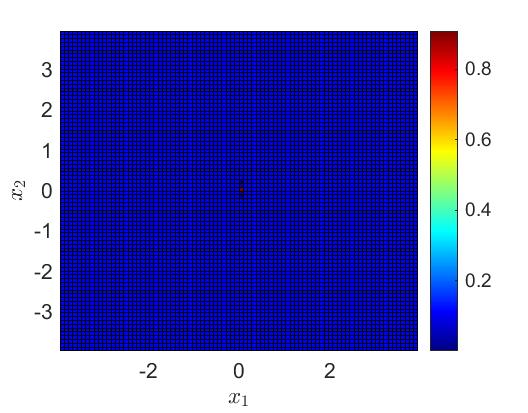}
    \vspace{-0.0in}
    \caption{ Invariant measure plot for $\xi\in [-1,1]$.}
     \label{ran3}
\end{figure}

\end{example}

\section{Conclusions}\label{section_conclusion}
Weaker set-theoretic notion of almost everywhere stability for stochastic dynamical system is introduced. Linear P-F operator-based Lyapunov measure as a new tool for stochastic stability verification is introduced. Duality between the Lyapunov function and the Lyapunov measure is established in a stochastic setting. Set-oriented numerical methods are proposed for the finite dimensional approximation of the Lyapunov measure. The finite dimensional approximation introduce further weaker notion of stability refereed to as coarse stochastic stability. The framework developed in this paper can be easily extended to the case where the noise, $\xi_n$, forms a Markov processes. This can be done by employing the P-F operator defined for systems with Markov noise processes \cite{froyland_extracting}.

\section{Acknowledgments}
The author would like to acknowledge Venketash Chinde from Iowa State University for help with simulations.

\bibliographystyle{IEEEtran}
\bibliography{ref,ref1,Umesh_ref}
\end{document}